\newtheorem{Proposition}{Proposition}[section]
\newtheorem{Theorem}[Proposition]{Theorem}
\newtheorem{Corollary}[Proposition]{Corollary}
\newtheorem{Lemma}[Proposition]{Lemma}
\newtheorem{Example}[Proposition]{Example}
\newcommand{\spa}{\ }
\numberwithin{equation}{section}
\DeclareMathOperator{\GL}{GL}
\DeclareMathOperator{\Hom}{Hom}
\DeclareMathOperator{\Ind}{Ind}
\DeclareMathOperator{\Lie}{Lie}
\DeclareMathOperator{\Rep}{Rep}
\newcommand{\bbP}{{\mathbb P}}
\newcommand{\bbQ}{{\mathbb Q}}
\newcommand{\bB}{{\bf B}}
\newcommand{\bG}{{\bf G}}
\newcommand{\bL}{{\bf L}}
\newcommand{\bP}{{\bf P}}
\newcommand{\bT}{{\bf T}}
\newcommand{\bU}{{\bf U}}
\newcommand{\fra}{{\mathfrak a}}
\newcommand{\frb}{{\mathfrak b}}
\newcommand{\frd}{{\mathfrak d}}
\newcommand{\frg}{{\mathfrak g}}
\newcommand{\frl}{{\mathfrak l}}
\newcommand{\frn}{{\mathfrak n}}
\newcommand{\frp}{{\mathfrak p}}
\newcommand{\frq}{{\mathfrak q}}
\newcommand{\frt}{{\mathfrak t}}
\newcommand{\fru}{{\mathfrak u}}
\newcommand{\frx}{{\mathfrak x}}
\newcommand{\cD}{{\mathcal D}}
\newcommand{\cF}{{\mathcal F}}
\newcommand{\cL}{{\mathcal L}}
\newcommand{\cM}{{\mathcal M}}
\newcommand{\cO}{{\mathcal O}}
\newcommand{\cX}{{\mathcal X}}
\newcommand{\Qp}{\mathbb Q_p} 
\DeclareMathOperator{\alg}{alg}
\newcommand{\Oa}{\cO_{\alg}}
\begin{document}

\title[The JH-series of the locally analytic Steinberg representation]{The Jordan-H\"older series of the locally analytic Steinberg representation}

\author{Sascha Orlik, Benjamin Schraen}

\address{Fachgruppe Mathematik und Informatik \\ Bergische Universit\"at Wuppertal,
Gau\ss{}\-strasse 20 \\ 42119 Wuppertal\\ Germany.}
\email{orlik@math.uni-wuppertal.de}
\address{DMA \\ \'Ecole Normale Sup\'erieure, 45 rue d'Ulm \\ 75005 Paris
  \\ France.}
\email{benjamin.schraen@ens.fr}

\begin{abstract}
We determine the composition factors of a Jordan-H\"older series including multiplicities of the locally analytic Steinberg representation.
For this purpose we prove the acyclicity of the evaluated locally analytic Tits complex giving rise to the
Steinberg representation. Further we describe some analogue of the Jacquet functor applied to the irreducible principal
series representation constructed in \cite{OS2}.
\end{abstract}

\maketitle

\normalsize

\section{Introduction}

In this paper we study the locally analytic Steinberg representation $V^G_B$  for a given  split reductive $p$-adic Lie group $G$.
This type of object arises in various fields of Representation Theory, cf. \cite{Hum1,DOR}.
In the smooth representation theory of p-adic Lie groups, as well as in the case of finite groups of Lie type,
it is related to the (Bruhat-)Tits building and has therefore interesting applications \cite{Car,DM,Ca1}.
In the locally analytic setting it comes up so far in the $p$-adic Langlands program
with respect to semi-stable but not crystalline Galois representation \cite{Br}. More precisely,
$V^G_B$  coincides with the set of locally analytic vectors in the continuous Steinberg representation which should arise
in a possible local $p$-adic Langlands correspondence. Our main result
gives the composition factors including multiplicities of a Jordan-H\"older series for $V^G_B$. This answers a question raised by
Teitelbaum in \cite{T}.

Let $G$ be a split reductive $p$-adic Lie group over a finite extension $L$ of $\bbQ_p$ and let $B\subset G$ be a
Borel subgroup. The definition of $V^G_B$ is completely analogous to the above mentioned classical cases.
It is given by the quotient
$$V^G_B=I^G_B/\sum_{P \supsetneq B} I^G_P, $$
where $I^G_P=C^{an}(G/P,K)$ is the $G$-representation consisting of locally
$L$-analytic functions on the partial flag manifold $G/P$ with coefficients in some 
fixed finite extension  $K$ of $L$. In contrast to the smooth situation or that of a finite group of Lie type, the locally analytic
Steinberg representation is not irreducible. Indeed, it
contains the smooth Steinberg representation $v^G_B= i^G_B/\sum_{P
  \supsetneq B} i^G_P, $ where $i^G_P=C^\infty(G/P,K),$ as a closed
subspace.  On the other hand, $V^G_B$ has a composition series of finite
length and therefore the natural question of determining its Jordan-H\"older series comes up. Morita \cite{Mo} proved that for $G=\GL_2$, the topological dual of
$V^G_B$ is isomorphic to the space of $K$-valued sections of the canonical
sheaf $\omega$ on the Drinfeld half space $\cX=\bbP^1\setminus \bbP^1(L)$.  In
higher dimensions Schneider and Teitelbaum \cite{ST1} construct an injective
map from the space of $K$-valued sections of $\omega$ to the topological dual
of $V^G_B.$ However, the natural hope that this map is an isomorphism in
general turns out not to be correct. In fact, this follows by gluing some
results of \cite{Schr} and \cite{O} considering the Jordan-H\"older series of both representations
in the case of $\GL_3$.

In order to determine the composition factors of $V^G_B$ in the general case, we apply the
machinery  constructing locally analytic $G$-representations
$\cF^G_P(M,V)$ developed in \cite{OS2}. Here $M$ is an object of type $P$
in the category $\cO$ of Lie algebra  representations of $\Lie G$ and $V$ is a smooth
admissible representation of a Levi factor $L_P \subset P$ (we refer to
Section 2 for a more detailed recapitulation). It is proved in loc.cit. that
$\cF^G_P(M,V)$ is topologically irreducible if $M$ and
$V$ are simple objects and if furthermore $P$ is maximal for $M.$ On the
other hand, it is shown that $\cF^G_P$ is bi-exact which allows us to
speak of a locally analytic BGG-resolution.  This latter aspect is one main
ingredient for proving the acyclicity of the evaluated locally analytic Tits complex
$$ 0 \rightarrow  I^G_G \rightarrow \bigoplus_{K \subset \Delta \atop |\Delta\setminus K|=1}I^G_{P_K} \rightarrow \bigoplus_{K \subset \Delta \atop |\Delta\setminus K|=2}I^G_{P_K}
\rightarrow \dots \rightarrow \bigoplus_{K \subset \Delta
  \atop |K|=1}I^G_{P_K}\rightarrow I^G_B \rightarrow V^G_B\rightarrow 0.
$$
Here $\Delta$ is the set of simple roots with respect to $B$ and a choice of a maximal torus $T\subset B$.
Hence the determination of the composition factors of $V^G_B$ is reduced to the situation of an induced representation
$I^G_P$ which lies in the image of the functor $\cF^G_P.$ This leads to the question when two irreducible
representations of the shape $\cF^G_P(M,V)$ with $V$ a composition factor of $i^G_B$ are isomorphic (treated in Section 3).
It turns out that this holds true if and only if all ingredients are the same. Thus we arrive at the stage where Kazhdan-Lusztig theory
enters the game.

Now we formulate our main result.  For two reflections $w,w'$ in the Weyl
group $W$, let $m(w',w)\in {\mathbb Z}_{\geq 0}$ be the multiplicity of the
simple highest weight module $L(w \cdot 0)$ of weight $w \cdot 0\in
\Hom_L(\Lie T,K)$ in the Verma module $M(w' \cdot 0)$ of weight $w' \cdot 0.$
Let ${\rm supp}(w)\subset W$ be the subset of simple
reflections which appear in $w$.  Our main result is the following theorem in which we
identify the set of simple reflections with  the set $\Delta.$

\noindent {\bf Theorem:} {\it For $w \in W$, let $I \subset \Delta$ be a subset
such that the standard parabolic subgroup $P_I$ attached to $I$ is maximal for
$L(w \cdot 0)$. Let $v_{P_J}^{P_I}$ be the smooth generalized Steinberg
representation of $L_{P_I}$ with respect to a subset $J\subset I.$ Then the
multiplicity of the irreducible representation $\cF_{P_I}^G(L(w \cdot 0),
v_{P_J}^{P_I})$ in $V_B^G$ is given by
$$\sum_{w'\in W \atop {\rm supp}(w')=J} (-1)^{\ell(w')+|J|}m(w',w),$$ and we obtain in
this way all the Jordan-H\"older factors of $V_B^G$. In particular, the
smooth Steinberg representation is the only smooth subquotient of
$V_B^G$. Moreover, the representation $\cF_{P_I}^G(L(w \cdot 0),
v_{P_J}^{P_I})$ appears with a non-zero multiplicity if and only if $J
\subset {\rm supp}(w)$. }

\vspace{0.5cm}
We close this introduction by mentioning that we discuss in our paper more generally generalized locally analytic Steinberg  representation $V^G_P$, as well as their twisted versions $V^G_P(\lambda)$ involving a dominant weight $\lambda \in X^\ast(T)$.

\vspace{0.5cm} {\it Notation:} We denote by $p$ a prime and by $K \supset
L\supset \bbQ_p$ finite extensions of the field of $p$-adic integers $\Qp$.
Let $O_L$ be the ring of integers in $L$ and fix an uniformizer $\pi$
of $O_L$. We let $k_L=O_L/(\pi)$ be the corresponding residue field.
For a locally convex $K$-vector space $V$, we denote by $V'$ its strong dual, i.e.,
the $K$-vector space of continuous linear forms equipped with the strong topology of bounded convergence.

For an algebraic group ${\bf G}$  over $L$ we denote by $G={\bf G}(L)$ the p-adic Lie group of $L$-valued points.   We use
a Gothic letter $\frg$ to indicate its Lie algebra. We denote by $U(\frg):=U(\frg\otimes_L K)$
the universal enveloping algebra of $\frg$ after base change to $K.$  We let $\cO$ be the category $\cO$ of Bernstein-Gelfand-Gelfand in the sense of \cite{OS2}.

\section{The functors $\cF^G_P$}

In this first section we recall the definition of the functors $\cF^G_P$ constructed in \cite{OS2}. As explained in the introduction they are crucial for the determination of a Jordan-H\"older series of the
locally analytic Steinberg representation.

Let ${\bf G}$ be a split reductive algebraic group over $L$. Let ${\bf T}\subset {\bf G}$ be a maximal torus and fix a  Borel subgroup  ${\bf B\subset G}$ containing ${\bf T}$.  We identify the group $X^\ast({\bf T})$ of algebraic characters of $T$ via the derivative as a lattice in $\Hom_L(\frt,K)$.
Let ${\bf P}$ be standard parabolic subgroup (std psgp). Consider the Levi
decomposition ${\bf P=L_P \cdot U_P}$ where ${\bf L_P}$ is the Levi
subgroup containing ${\bf T}$ and ${\bf U_P}$ is its unipotent radical. Let ${\bf U_P^-}$ be
the unipotent radical of the opposite parabolic subgroup.   Let $\cO^\frp$ be the full subcategory of
$\cO$ consisting of $U(\frg)$-modules of type $\frp=\Lie{\bf P}$.  Its objects are $U(\frg)$-modules $M$
over the coefficient field $K$   satisfying the following properties:
\begin{enumerate}
\item The action of $\fru$ on $M$ is locally finite. \smallskip
\item The action of $\frl$ on $M$ is semi-simple and locally finite. \smallskip
\item $M$ is finitely generated as $U(\frg)$-module.
\end{enumerate}
In particular, we have $\cO=\cO^\frb.$ Moreover, if ${\bf Q}$ is another parabolic subgroup of ${\bf G}$ with ${\bf P} \subset {\bf Q}$, then
$\cO^\frq \subset \cO^\frp.$

Let ${\rm Irr}(\frl)^{\rm fd}$ be the set of finite-dimensional
irreducible $\frl$-modules. Any object in $\cO^\frp$ has by property (2) a direct sum decomposition into $\frl$-modules
\begin{equation}\label{isotypical_decom}
 M= \bigoplus_{\fra \in {\rm Irr}(\frl)^{\rm fd}} M_{\fra}
\end{equation}
where $M_{\fra}\subset M$ is the $\fra$-isotypic part of the finite-dimensional irreducible representation $\fra$.
We let $\Oa^\frp$ be the full subcategory of $\cO^\frp$
given by objects such that all $\frl$-representations  appearing in (\ref{isotypical_decom}) are induced by  finite-dimensional algebraic ${\bf L_P}$-representations.
The above inclusion $\cO^\frq \subset \cO^\frp$ is compatible with these new subcategories, i.e. we also have $\Oa^\frq \subset \Oa^\frp.$  
In particular, the category $\Oa^\frp$ contains all finite-dimensional $\frg$-modules which come from $G$-modules.
Every object in $\Oa^\frp$ has a Jordan-H\"older series which coincides with the Jordan-H\"older series in $\cO.$

Let $\Rep^{\infty,ad}_K(L_P)$ be the category of smooth admissible $L_P$-representations with coefficients over $K$. In \cite{OS2} there is constructed a bi-functor
$$\cF^G_P: \Oa^\frp\times \Rep^{\infty,ad}_K(L_P) \longrightarrow  \Rep_K^{\ell a}(G),$$
where $\Rep^{\ell a}_K(G)$ denotes the category of locally analytic  $G$-representations with coefficients in $K.$
It is contravariant in the first and covariant in the second variable. Furthermore, $\cF^G_P$ factorizes
through the full subcategory of admissible representations in the sense of Schneider and Teitelbaum \cite{ST2}.
Let us recall the definition of $\cF^G_P$.

Let $M$ be an object of $\Oa^\frp$. By the defining axioms (1) - (3) above  there is a surjective map
\begin{equation}\label{surjective_map}
\phi:U(\frg) \otimes_{U(\frp)} W \rightarrow M
\end{equation}
for some finite-dimensional  algebraic $P$-representation  $W\subset M$.
Let $V$ be a additionally a smooth admissible $L_P$-representation. We consider $V$ via the trivial action of $U_P$  as a $P$-representation.
Further by equipping $V$ with the finest locally convex topology it becomes a locally analytic $P$-representation, cf. \cite[\S$2$]{ST4}. Hence we may
consider the tensor product  representation $W' \otimes_K V$  as a locally analytic $P$-representation.
Let $\Ind^G_P : \Rep_K^{\ell a}(P) \to \Rep_K^{\ell a}(G) $ be the locally analytic induction functor \cite{Fe}.
Then
$$\cF^G_{P}(M,V) = \Ind^G_{P}(W' \otimes_K V)^{\frd}$$
denotes the subset of functions $f\in \Ind^G_{P}(W' \otimes _K V)$ which are killed by the ideal $\frd=\ker(\phi).$ 
It is shown in that $\cF^G_{P}(M,V)$  is a well-defined  $G$-stable closed subspace of
$\Ind^G_{P}(W'\otimes_K V)$ and has therefore a natural structure of a locally analytic $G$-representation.
Further the above construction is even functorial. If $V={\bf 1}$ is the trivial $L_P$-representation, we simply write
$\cF^G_P(M)$ instead of $\cF^G_P(M,{\bf 1}).$ The functors $\cF^G_P$ satisfy the following properties:

\begin{itemize}
\item (exactness) The bi-functor  $\cF^G_P$ is exact in both arguments.

\bigskip

\item (PQ-formula) Let $Q \supset P$ be another parabolic subgroup. We suppose that $L_P \subset L_Q.$
If $V$ is a smooth admissible $L_P$-representation, then we denote by
$$i^Q_P(V)=\Ind^{\infty,Q}_P(V)$$
the smooth induced $Q$-representation. Note that as $L_Q$-representation one has the identification
$i^Q_P(V)=i^{L_Q}_{L_P\cdot(U_P\cap L_Q)}(V).$ Let $M\in \Oa^\frq \subset \Oa^\frp.$
Then there is the following identity:
$$\cF^G_{P}(M,V)= \cF^G_{Q}(M,i^Q_P(V)).$$

\bigskip

\item (irreducibility)\footnote{Note that there is
  the restriction that $p>2$ if the root system of $G$ contains a factor of
  type $B,C,F_4$ resp.  $p>3$ if a factor of type $G_2$ occurs.} A std psgp $P\subset G$ is called maximal for an object $M\in \cO$ if $M \in \cO^\frp$ and if $P$ is maximal with
respect to this property. Let $P$ be a std psgp, maximal  for a simple object $M\in \Oa$. Further let $V$ be an  irreducible smooth admissible  $L_P$-representation,
then $\cF^G_P(M,V)$ is (topologically) irreducible.

\end{itemize}

In \cite{OS2} it is explained how one can deduce from the previous properties of the bi-functors $\cF^G_P$ the Jordan-H\"older series of a representation $\cF^G_P(M,V)$. Let us recall this procedure in the case $V={\bf 1}.$
The smooth generalized Steinberg representation to $P$ is the quotient
$$v^G_P= i^G_P/\sum_{P\subsetneq Q \subset G}i^G_Q.$$
This is an irreducible $G$-representation and all irreducible subquotients of $i^G_P$ occur in the shape $v^G_Q$
with $Q\supset P$ and with multiplicity one, cf. \cite[Ch. X,\S 4]{BoWa}, \cite{Ca2}.

Let $M$ be an object of the category $\Oa^\frp.$ Then it has a Jordan-H\"older series
$$M=M^0 \supset M^1 \supset M^2 \supset \ldots \supset M^r\supset M^{r+1}=(0) $$
in the same category. By applying the functor $\cF^G_P$ to it we get a sequence of surjections
$$\cF^G_P(M) \stackrel{p_0}{\to} \cF^G_P(M^1) \stackrel{p_1}{\to} \cF^G_P(M^2) \stackrel{p_2}{\to} \ldots \stackrel{p_{r-1}}{\to} \cF^G_P(M^r) \stackrel{p_r}{\to}   (0) .$$
For any integer $i$ with $0\leq i\leq r,$ we put
$$q_i:=p_i\circ p_{i-1} \circ \cdots \circ p_1 \circ p_0.$$
and set
$$\cF^i:=\ker(q_i)$$
which is a closed subrepresentation of $\cF^G_P(M).$
We obtain a filtration
\begin{equation}\label{filtration_neu}
\cF^{-1}=(0)\subset \cF^0 \subset \cdots \subset \cF^{r-1} \subset \cF^{r}=\cF^G_P(M)
\end{equation}
by closed subspaces with
$$\cF^i/\cF^{i-1}\simeq \cF^G_P(M^i/M^{i+1}) \spa .$$
Let $Q_i=L_i\cdot U_i\supset P$ a std psgp maximal for $M^i/M^{i+1}.$ Then
by the $PQ$-formula, we get
$$\cF^G_P(M^i/M^{i+1})=\cF^G_{Q_i}(M^i/M^{i+1},i^{Q_i}_P).$$
where $i^{Q_i}_P = i^{L_i}_{L_i\cap P}.$
We conclude that the representations
$$\cF^G_{Q_i}(M^i/M^{i+1},v^{Q_i}_{R})$$ where $R$ is a std psgp of $G$
with $Q_i \supset R \supset P$ and $v^{Q_i}_R = v^{L_i}_{L_i\cap R}$ are
the topologically irreducible constituents of
$\cF^G_{Q_i}(M^i/M^{i+1},i^{Q_i}_{P})$.  By refining the filtration
(\ref{filtration_neu}) we get thus a Jordan-H\"older series of $\cF^G_P(M)
$.

Finally we recall the parabolic BGG resolution of a finite-dimensional algebraic  $G$-representation \cite{Le}.
Let $W$ be the Weyl group of $G$ and consider the dot action $\cdot$  of $W$ on $X^\ast({\bf T})$  given by
$$w\cdot \chi= w(\chi+\rho)-\rho,$$
where $\rho=\frac{1}{2}\sum_{\alpha \in \Phi^+} \alpha.$
For a character $\lambda \in X^\ast({\bf T})$, let $M(\lambda)=U(\frg) \otimes_{U(\frb)} K_\lambda$ be the corresponding Verma module.
Clearly $M(\lambda)$ is an object of  $\Oa$.  We denote its irreducible quotient by $L(\lambda)$.
Let $\Delta$ be the set of simple roots and $\Phi$
the set of  roots of ${\bf G}$ with respect to a maximal torus  ${\bf T\subset B}$.
Let
$$X_+=\{\lambda \in  X^\ast({\bf T}) \mid (\lambda,\alpha^\vee )\geq 0 \; \forall \alpha \in \Delta \}$$
be the set of dominant weights in $X^\ast({\bf T}).$
If $\lambda \in X_+$, then $L(\lambda)$ is finite-dimensional and  comes from an irreducible algebraic $G$-representation.
In this situation, we also write $V(\lambda)$ for $L(\lambda).$

For a subset $I\subset \Delta$, let $P=P_I\subset G$ be the attached std psgp and denote by
$$X^+_I=\{\lambda \in X^\ast({\bf T}) \mid (\lambda,\alpha) \geq 0 \;\forall \alpha \in I\}$$
be the set of $L_I$-dominant weights.
Every $\lambda\in X^+_I$ gives rise to a  finite-dimensional algebraic  $L_I$-representation
\begin{equation}
V_I(\lambda)=V_P(\lambda).
 \end{equation}
We consider $V_I(\lambda)$ as a $P_I$-module by letting act $U_I$ trivially on it.
The generalized (parabolic) Verma module attached to the weight $\lambda$  is given by
$$M_I(\lambda)=U(\frg) \otimes_{U(\frp_I)} V_I(\lambda).$$
We have a surjective map
$$M(\lambda) \rightarrow M_I(\lambda),$$
where the kernel is given by the image of $\oplus_{\alpha \in I}
M(s_\alpha\cdot \lambda) \rightarrow M(\lambda)$, cf. \cite[Prop. $2.1$]{Le}.

Let $W_I\subset W$ be the parabolic subgroup induced by $I\subset \Delta.$ Consider the set $^IW=W_I\backslash W$
of left cosets which we identify with their representatives  of shortest length in $W$.
Let $^Iw$ be the element of maximal length in $^IW$.  If $\lambda$ is in $X_+$ and $w\in {}^IW$ then
$w\cdot \lambda \in X_I^+$, cf. \cite[p. 502]{Le}.
The $I$-parabolic BGG-resolution of $V(\lambda)$, $\lambda \in X_+$, is given by the exact sequence
\begin{multline*}
0 \rightarrow  M_I(^Iw\cdot \lambda) \rightarrow \bigoplus_{w\in{} ^IW\atop \ell(w)=\ell(^Iw)-1} M_I(w\cdot \lambda)
\rightarrow \dots \\ \cdots \rightarrow \bigoplus_{w\in{} ^IW \atop \ell(w)=1} M_I(w\cdot \lambda) \rightarrow M_I(\lambda) \rightarrow
V(\lambda)\rightarrow 0.
\end{multline*}
We refer to \cite{Le} resp. \cite{Ku} for the definition of the differentials in this complex.
By applying our exact functor $\cF^G_P$ to it, we get another exact sequence
\begin{multline*}
0 \leftarrow \cF^G_P( M_I(^Iw \cdot \lambda)) \leftarrow \bigoplus_{w\in ^IW\atop \ell(w)=\ell(^Iw)-1} \cF^G_P(M_I(w\cdot \lambda))
\leftarrow \dots \\
\cdots \leftarrow \bigoplus_{w\in ^IW \atop \ell(w)=1} \cF^G_P(M_I(w\cdot \lambda)) \leftarrow \cF^G_P (M_I(\lambda))
\leftarrow
\cF^G_P(V(\lambda))\leftarrow 0
\end{multline*}
which coincides by the very definition of $\cF^G_P$ and the PQ-formula with
\begin{multline}\label{parabolic_resolution}
0 \leftarrow \Ind^G_P(V_I(^Iw \cdot \lambda)') \leftarrow \bigoplus_{w\in{} ^IW\atop \ell(w)=\ell(^Iw)-1} \Ind^G_P(V_I(w\cdot \lambda)') \leftarrow \dots \\
\cdots \leftarrow \bigoplus_{w\in ^IW \atop \ell(w)=1} \Ind^G_P(V_I(w\cdot \lambda)') \leftarrow  \Ind^G_P(V_I(\lambda)')\leftarrow 
V(\lambda)'\otimes_K i^G_P \leftarrow 0.
\end{multline}

\section{Isomorphism between simple objects}

In this section we analyse when two simple modules of the shape
$\cF^G_{P_I}(M,v_{P_J}^{P_I})$, with $P_I$ maximal for $M$ and a subset $J \subset
I$, are isomorphic. This will be used in the next section for determining
the multiplicities of composition factors of the locally analytic Steinberg representation.

Let's begin by recalling some additional notation of \cite{OS2}. Let $\bG_0$ be a
split reductive group model of $\bG$ over $\mathcal{O}_L$. Let $\bT_0
\subset \bB_0 \subset \bG_0$ be $\cO_L$-models of $\bT$ and $\bB$. Fix a
std psgp $\bP_0$ and  denote by $\bU_{P,0}$ its unipotent radical. Let
$\bU_{P,0}^-$ be its opposite unipotent radical and denote by $\bL_0$ the Levi
factor containing $\bT_0$. Let $G_0=\bG_0(\cO_L)\subset G$, $P_0=\bP_0(\cO_L)=G_0
\cap P \subset P$ etc. be the corresponding  compact open subgroups consisting of $O_L$-valued points.  
We denote by $I=p^{-1}(\bG_0(k_L))\subset G_0$ the standard Iwahori subgroup 
where $p : \, G_0 \rightarrow \bG_0(k_L)$ is the reduction map.

Consider now  for an open subgroup $H$ of $G_0$  the distribution algebra $D(H):=D(H,K)=C_L^{an}(H,K)'$ which is defined by the
dual of the locally convex K-vector space $C_L^{an}(H,K)$ of locally $L$-analytic functions \cite{ST2}. It has the
structure of a Fr\'echet-Stein algebra. More precisely, for each
$\frac{1}{p} < r < 1$, there is a multiplicative norm $q_r$ on the Fr\'echet algebra $D(H)$ such that
if $D(H)_r$ denotes the Banach algebra given by the completion of $D(H)$ with respect to $q_r$, we have a topological
isomorphism of algebras $D(H) \simeq \varprojlim_r D(H)_r$. For the precise
definition of a Fr\'echet-Stein algebra we refer to loc.cit.
We set $P_H=P_0 \cap H$ and let $U(\frg,P_H)$ be the subalgebra of $D(H)$ generated by $U(\frg)$ and
$D(P_H)$. Let $U(\frg,P_H)_r$ be the topological closure of
$U(\frg,P_0)$ in $D(H)_r$.

The notion of a coadmissible module on a Fr\'echet-Stein algebra is defined in
\cite[\S$3$]{ST2}. If $\cM$ is such a coadmissible $D(H)$-module, it comes along
with a family $(q_{r,\cM})_r$ of seminorms such that if $\cM_r$ denotes the completion
of $\cM$ with respect to $q_{r,\cM}$, we have $\cM \simeq \varprojlim_r \cM_r$ and each
$\cM_r$ is a finitely-generated $D(H)_r$-module.

Let $M$ be a simple object of $\cO_{\alg}^{\frp}$  As such an object is has naturally the structure of a
$U(\frg,P_0)$-module. By \cite[Section 4]{OS2} there is a
continuous $D(G_0)$-isomorphism
\begin{equation*}
  D(G_0) \otimes_{U(\frg,P_0)} M  \simeq \cF_P^G(M)'.
\end{equation*}
Let  $\cM=D(H) \otimes_{U(\frg,P_H)} M$.  Then $\cM$ is a coadmissible $D(H)$-module in the above sense, cf.
\cite[Prop.4.4]{OS2} and $\cM_r$ is given by $\cM_r=D(H)_r \otimes_{U(\frg,P_H)} M$. In this special case the seminorm
$q_{r,\cM}$ is in fact a norm. We denote by $q_{r,M} $ its restrction to $M$ via the inclusion $M\hookrightarrow \cM$ and
by $M_r$ the completion of $M$ for the norm $q_{r,M}$, which we may identify with $U(\frg,P_H)_r\otimes_{U(\frg,P_H)} M.$
Thus we obtain a $U(\frg,P_H)_r$-equivariant map $M_r \rightarrow \cM_r$ giving rise to a $D(H)_r$-equivariant isomorphism
$$D(H)_r\otimes_{U(\frg,P_H)_r} M_r \xrightarrow{\sim} \cM_r .$$
Thus we get a topological isomorphism
\begin{equation}\label{isocoadm}
 \cM \simeq \varprojlim\nolimits_{r} D(H)_r \otimes_{U(\frg,P_H)_r} M_r.
\end{equation}

Let $\mathcal{D}$ be the category of topological $U(\frt)$-modules $M$
whose topology is metrizable, which are semi-simple with finite-dimensional
eigenspaces and such that the topology can be defined by a family of norms
$(q_r)_r$ such that
\begin{equation} \label{condnorm}
  q_r(\sum_\mu x_{\mu})=\sup_{\mu} q_r(x_{\mu}),
\end{equation}
for $x_{\mu} \in M_{\mu}$ in a decomposition $M=\bigoplus_{\mu \in
  \Hom_L(\frt,K)} M_{\mu}$. In this case, the completion $M_r$ of $M$ with respect to the
norm $q_r$ is given by
\begin{equation*}
  M_r=\{ \sum_\mu x_{\mu}\mid q_r(x_{\mu})  \rightarrow 0 \mbox{ cofinite} \}.
\end{equation*}
A simple consequence of this description is the uniqueness of the expansion $\sum_\mu
x_{\mu}$ and the fact that $(M_r)_{\mu}=M_{\mu}$ for all $\mu \in \Hom_L(\frt,K)$. In particular, the $U(\frt)$-eigenspaces
of $M_r$ are all finite-dimensional.

\begin{Lemma}
  As a subcategory of the category of $U(\frt)$-modules, the category $\cD$ is stable under subobjects and quotients and contains
  all the objects coming from $\cO_{alg}$.
\end{Lemma}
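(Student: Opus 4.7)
The plan is to treat the three claims separately: stability under subobjects, stability under quotients, and the containment of $\Oa$.

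For subobjects, let $M\in \cD$ and let $N\subset M$ be a $U(\frt)$-submodule. Since $\frt$ acts semi-simply on $M$, one has $N = \bigoplus_\mu N_\mu$ with $N_\mu = N\cap M_\mu$ finite-dimensional. Endow $N$ with the induced topology, whose defining family of norms is $(q_r|_N)_r$; the compatibility (\ref{condnorm}) is inherited directly from $M$, as is metrizability.

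For quotients, I would first check that any $U(\frt)$-submodule $N\subset M$ is automatically closed. Indeed, every element of $M$ has finite support in its weight decomposition, each $M_\mu$ is closed in $M$ (the projection to an eigenspace being continuous since $q_r(x_\mu)\leq q_r(x)$ by (\ref{condnorm})), and therefore for $x\in \overline{N}$ each component $x_\mu$ lies in $\overline{N_\mu}= N_\mu$, so $x\in N$. Equip $M/N$ with the quotient seminorms $\bar{q}_r(\bar{x}) = \inf_{y\in N} q_r(x-y)$. To verify (\ref{condnorm}), decompose $x=\sum_\mu x_\mu$; since $N = \bigoplus_\mu N_\mu$ one may choose the weight components $y_\mu\in N_\mu$ independently, and (\ref{condnorm}) on $M$ gives
$$\bar{q}_r(\bar{x}) = \inf_{y\in N}\sup_\mu q_r(x_\mu-y_\mu) = \sup_\mu \inf_{y_\mu\in N_\mu} q_r(x_\mu-y_\mu) = \sup_\mu \bar{q}_r(\bar{x}_\mu).$$
The quotient eigenspaces $(M/N)_\mu = M_\mu/N_\mu$ are finite-dimensional, and metrizability is preserved under quotients by closed subspaces.

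For the containment of $\Oa$, let $M\in \Oa$. Semi-simplicity of the $\frt$-action is part of axiom~(2) for the category $\cO$, and finite-dimensionality of the $\frt$-weight spaces is a standard fact in the BGG category. It remains to exhibit the family of norms: I would use $(q_{r,M})_r$ obtained by restriction from the coadmissible module $\cM = D(H)\otimes_{U(\frg,P_H)} M$, as in the discussion preceding the lemma. Condition (\ref{condnorm}) then reduces to the corresponding property of $D(H)_r$, which follows from the construction of the norm on $D(H)_r$ via a PBW basis adapted to the adjoint action of $\frt$: weight vectors of $M$ correspond to PBW monomials of a definite $\frt$-weight, and the sup-norm form of $q_r$ on such monomials translates directly into (\ref{condnorm}) for the weight decomposition of $M$. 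The main obstacle is precisely this last verification: the first two parts are formal manipulations with the weight decomposition, whereas confirming (\ref{condnorm}) in the $\Oa$ case rests on the precise compatibility between the Fr\'echet-Stein norms on $D(H)$ and the $\frt$-grading provided by the PBW presentation used in \cite{OS2}.
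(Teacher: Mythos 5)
Your treatment of stability under subobjects and quotients is correct and essentially the paper's own argument: closedness of a $U(\frt)$-submodule follows from the weight decomposition together with finite-dimensionality of the eigenspaces, and your inf--sup interchange for the quotient norms is a repackaging of the paper's $\epsilon$-argument (the interchange is legitimate because $x$ has finite weight support, so only finitely many components $y_\mu$ need to be chosen near-optimally, the remaining ones being taken to be $0$).

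The gap is in the last part. For a general object $M$ of $\Oa$ the norm $q_{r,M}$ is the restriction of the canonical seminorm of the coadmissible module $D(H)\otimes_{U(\frg,P_H)}M$, and your justification of condition \eqref{condnorm} --- ``weight vectors of $M$ correspond to PBW monomials of a definite $\frt$-weight'' --- only makes sense when $M$ is free over $U(\fru^-)$, i.e.\ for modules of the form $U(\frg)\otimes_{U(\frb)}W\simeq U(\fru^-)\otimes_K W$ with $W$ a finite-dimensional $B$-representation. For, say, a simple quotient $L(\lambda)$, weight vectors are images of sums of PBW monomials and the natural norm is a quotient seminorm with no such explicit monomial description, so the direct verification you yourself flag as the main obstacle does fail as stated. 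The paper resolves this by checking membership in $\cD$ only for the modules $U(\frg)\otimes_{U(\frb)}W\simeq U(\fru^-)\otimes_K W$, where it is a consequence of Kohlhaase's Theorem $1.4.2$ (precisely the explicit sup-norm/PBW description you have in mind), and then invoking the stability under quotients just established, since every object of $\Oa$ is a quotient of such a module. You have already proved quotient stability, so what is missing is exactly this reduction step; inserting it turns your sketch into the paper's proof.
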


\begin{proof}
  A $U(\frt)$-submodule $S$ of an object $M$ in $\cD$ is clearly contained in $\cD$. In  particular, we have
  $S= \bigoplus_{\mu} S_{\mu}$ with $S_{\mu}=S \cap
  M_{\mu}$. Since the eigenspaces $M_{\nu}$ are finite-dimensional, each subspace $S_{\mu}$ is
  closed in $M_{\mu}$. It follows that $S$ is closed in $M$. As a consequence the
  quotient topology on $M/S$ is metrizable. It is given by the induced family of
  quotient norms $(\overline{q}_r)_r$. We have to check that condition
  \eqref{condnorm} is satisfied for each such norm $\overline{q}_r$. For every $\epsilon > 0$, there
  is an element $y=\sum_\mu y_\mu \in S$ such that
  \begin{align*}
    \overline{q}_r(\sum_\mu x_{\mu}+S) & \geq q_r(\sum_\mu x_{\mu}+y)-\epsilon \\
    & = \sup_\mu q_r(x_{\mu}+y_{\mu})- \epsilon \\
    & \geq \sup_\mu \overline{q}_r(x_{\mu} + S)-\epsilon.
  \end{align*}
 Hence $\overline{q}_r(\sum_\mu x_{\mu}+S) \geq \sup_\mu \overline{q}_r(x_{\mu} + S).$ The other inequality is immediate.

  As explained above, if $M$ is an object of $\cO_{\alg}$, then it has the structure of a topological
  metrizable $U(\frt)$-module. It is a consequence of \cite[Theorem $1.4.2$]{K} that each module of
  the form $U(\frg) \otimes_{U(\frb)} W \simeq U(\fru^-) \otimes_K W$, with
  $W$ a finite-dimensional representation of $B$, is an object of
  $\cD$.  But each object of $\cO_{\alg}$ is a quotient of some module of the shape  $U(\frg)
  \otimes_{U(\frb)} W$, so that we deduce the last assertion.
\end{proof}

The following statement generalizes \cite[Prop. 3.4.8]{OS1} in the split case and is again a consequence of \cite[1.3.12]{Fe}.

\begin{Proposition}\label{non-trivial intersection with U(s)-invariant
submodule}
Let $M$ be an object of $\cD$.

(i) We have an inclusion preserving bijection
\begin{eqnarray*}
\Big\{\mbox{closed $U(\frt)$-invariant subspaces of $M_r$} \Big\} &
\stackrel{\sim}{\longrightarrow} &
\Big\{\mbox{$U(\frt)$-invariant subspaces of $M$}\Big\}. \\
S & \longmapsto & S\cap M
\end{eqnarray*}
The inverse map is induced by taking the closure.

\medskip
(ii) Let $S \subset M_r$ be a closed $U(\frt)$-invariant
subspace, $S \neq 0$. Then $S \cap M \neq 0$. In
particular, any weight vector for the action of $\frt$ lies
already in $M$.
\end{Proposition}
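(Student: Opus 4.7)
The plan is to reduce both parts of the proposition to the following key claim: if $S \subset M_r$ is a closed $U(\frt)$-stable subspace and $x=\sum_\mu x_\mu\in S$ is the weight decomposition in $M_r$, then each weight component $x_\mu$ already belongs to $S$.

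Granting the key claim, part (ii) is immediate. Picking any nonzero $x\in S$ and any $\mu_0$ with $x_{\mu_0}\neq 0$, the claim places $x_{\mu_0}$ in $S$; since $(M_r)_{\mu_0}=M_{\mu_0}\subset M$, the element $x_{\mu_0}$ lies in $S\cap M$ and is nonzero. The ``in particular'' assertion is the same observation applied to the closed $U(\frt)$-stable line spanned by a weight vector.

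For part (i), the candidate inverse $N\mapsto \overline N$ is well-defined: every element of $U(\frt)$ acts as a continuous endomorphism of $(M_r,q_r)$, so the closure of a $U(\frt)$-stable subspace is again $U(\frt)$-stable. Using the decomposition $N=\bigoplus_\mu (N\cap M_\mu)$ coming from $U(\frt)$-semisimplicity on $M$, together with the sup-norm property \eqref{condnorm}, one computes $\overline N=\{\sum_\mu y_\mu\mid y_\mu\in N\cap M_\mu,\ q_r(y_\mu)\to 0 \text{ cofinite}\}$; intersecting with $M$ kills all but finitely many components and yields $\overline N\cap M=N$. Conversely, for a closed $U(\frt)$-stable $S\subset M_r$, the key claim shows that every finite truncation $\sum_{\mu\in F}x_\mu$ of an element $x=\sum_\mu x_\mu\in S$ already lies in $S\cap M$; because $q_r\bigl(x-\sum_{\mu\in F}x_\mu\bigr)=\sup_{\mu\notin F}q_r(x_\mu)\to 0$ as $F$ exhausts the (at most countable) support of $x$, we obtain $S=\overline{S\cap M}$, establishing the bijection.

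The heart of the argument, and the main obstacle, is the key claim itself. The approach, following \cite[1.3.12]{Fe}, is to produce elements of $U(\frt)$ acting on $M_r$ as approximate projectors onto the weight space $M_{\mu_0}$. Concretely, one fixes $t\in\frt$ whose values $a_\mu=\mu(t)$ separate the countably many weights appearing in $x$ and then uses Lagrange interpolation on an exhausting sequence of finite subsets $F_n$ of the support to produce $u_n\in U(\frt)$ with $\mu_0(u_n)=1$ and $\mu(u_n)=0$ for $\mu\in F_n\setminus\{\mu_0\}$. The delicate point is controlling the values $\mu(u_n)$ on the tail, so that, combined with the cofinite decay $q_r(x_\mu)\to 0$ guaranteed by membership in $M_r$, the sequence $u_n\cdot x$ converges to $x_{\mu_0}$ in the $q_r$-topology; closedness of $S$ then forces $x_{\mu_0}\in S$. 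Arranging this convergence requires a careful choice of $t$ placing the tail values $a_\mu$ in a region where the interpolating polynomial grows more slowly than $q_r(x_\mu)^{-1}$, and this is precisely what \cite[1.3.12]{Fe} supplies.
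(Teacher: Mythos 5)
Your proposal is correct and takes essentially the same route as the paper: the authors' one-line proof invokes the preceding description of $M_r$ (uniqueness of the expansions $\sum_\mu x_\mu$, continuity of the weight projections, $(M_r)_\mu=M_\mu$) together with the ``semi-simplicity of $M_r$'', which is exactly your key claim, and both you and the paper ultimately source that claim from F\'eaux de Lacroix [Fe, 1.3.12]. Your truncation and closure bookkeeping just makes explicit the ``above discussion'' that the paper leaves implicit, so there is no genuinely different idea involved.
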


\begin{proof}
 The proof follows by the above discussion and the semi-simplicity of $M_r.$
\end{proof}

Let $U=U_B$ be the unipotent radical of our fixed Borel subgroup $B\subset
G$ and let $\fru=\Lie U$ be its Lie algebra. Recall that if $N$ is a Lie algebra representation of $\frg,$
then $H^0(\fru,N)=\{n\in N \mid \frx\cdot n=0 \;  \forall\; \frx \in \fru\}$ denotes the subspace of vectors killed by
$\fru.$ This is a $U(\frt)$-module which is an object of $\cD$ if $N\in \cO_{\alg}.$

\begin{Corollary} \label{ncohomology} Let $M$ be an object of $\cO_{\alg}$.
Then $H^0(\fru,M_r)=H^0(\fru, M).$ In particular, $H^0(\fru,M_r)$  is finite-dimensional.
\end{Corollary}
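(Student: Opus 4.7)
The plan is to deduce the statement from the preceding Proposition applied to $S = H^0(\fru, M_r)$, so first I would verify that this is a closed, $U(\frt)$-stable subspace of $M_r$. The $U(\frt)$-stability is immediate from $[\frt, \fru] \subset \fru$: for $h \in \frt$, $X \in \fru$ and $v \in H^0(\fru, M_r)$, the identity $X \cdot (h \cdot v) = h \cdot (X \cdot v) + [X, h] \cdot v$ shows that both terms on the right vanish, so $h \cdot v$ is again $\fru$-invariant. Closedness follows from the fact that $M_r$ is a finitely generated Banach $U(\frg, P_0)_r$-module; each $X \in \fru$ thus acts continuously on $M_r$, and $H^0(\fru, M_r) = \bigcap_{X \in \fru} \ker(X)$ is an intersection of closed subspaces.

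By the preceding Lemma, $M \in \cO_{\alg}$ lies in the category $\cD$, so part (i) of the preceding Proposition identifies $H^0(\fru, M_r)$ with the closure in $M_r$ of $H^0(\fru, M_r) \cap M$, which by definition of the $\fru$-action equals $H^0(\fru, M)$. It thus suffices to show that $H^0(\fru, M)$ is finite-dimensional: a finite-dimensional subspace of the Hausdorff topological vector space $M_r$ is automatically closed, so the closure operation becomes trivial and yields simultaneously the equality $H^0(\fru, M_r) = H^0(\fru, M)$ together with the ``in particular'' assertion.

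The finite-dimensionality of $H^0(\fru, M)$ is a standard consequence of the finite-length structure of category $\cO$: any $M \in \cO_{\alg}$ admits a finite composition series whose simple subquotients are irreducible highest weight modules $L(\mu)$, each of which has one-dimensional space of $\fru$-invariants spanned by the highest weight vector (any other $\fru$-fixed weight vector would generate a proper submodule, contradicting simplicity). Since $H^0(\fru, -)$ is left exact, $\dim H^0(\fru, M)$ is bounded by the length of $M$ in $\cO$. I do not anticipate any serious obstacle: the delicate analytic input has been entirely packaged in the preceding Proposition, and only the two hypotheses on $H^0(\fru, M_r)$ and the recollection of this classical category-$\cO$ fact remain.
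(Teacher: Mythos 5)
Your proof is correct and follows essentially the same route as the paper: apply part (i) of the preceding Proposition to the closed $U(\frt)$-invariant subspace $H^0(\fru,M_r)$, identify its intersection with $M$ as $H^0(\fru,M)$, and use the finite-dimensionality (hence closedness) of $H^0(\fru,M)$ to see that taking closures changes nothing. You merely spell out details the paper leaves implicit (the $U(\frt)$-stability, continuity of the $\fru$-action, and the standard category-$\cO$ argument that $H^0(\fru,M)$ is finite-dimensional), all of which are fine.
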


\begin{proof}
We clearly have $H^0(\fru,M_r) \cap M = H^0(\fru,M).$ As $H^0(\fru,M_r)$ is closed
in $M_r$ by the continuity of the action of $\frg$  and as $H^0(\fru,M)$ is finite-dimensional and therefore complete the statement follows by Proposition \ref{non-trivial intersection with U(s)-invariant
submodule}.
\end{proof}

Let  $V$ be additionally a smooth admissible representation of $L_P$. Below we compute the
first $U$-homology resp. $U$-cohomology groups of the various representations
$\cF_P^G(M,V)$. More precisely, we denote by $\overline{H}_0(U,\cF_P^G(M,V))$ the
quotient of $\cF_P^G(M,V)$ by the topological closure of the $K$-subspace
generated by the elements $ux-x$ for $x \in \cF_P^G(M,V)$ and $u \in U$. It is the
largest Hausdorff quotient of $\cF_P^G(M,V)$ on which $U$ acts trivially.

\begin{Lemma}\label{Lemma_identity}
As Fr\'echet spaces we have $H^0(\frak{u}, \cF_P^G(M,V)') = H^0(\frak{u}, \cF_P^G(M)') \hat{\otimes}_K V'.$
\end{Lemma}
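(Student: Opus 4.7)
The plan is to isolate $V'$ as an external Fréchet tensor factor of $\cF_P^G(M,V)'$ on which $\mathfrak{u}$ acts trivially, and then extract $\mathfrak{u}$-invariants factor by factor. The decisive input is that $V$ is smooth, so the $\frg$-action (and in particular the $\mathfrak{u}$-action) on $V'$ vanishes, irrespective of how $V'$ is viewed as a $P$-module through $P \twoheadrightarrow L_P$.

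The first step is to establish a topological isomorphism of Fréchet spaces
$$ \cF_P^G(M,V)' \;\simeq\; \cF_P^G(M)' \,\hat{\otimes}_K\, V', $$
which is $\mathfrak{u}$-equivariant when $V'$ is given the trivial $\mathfrak{u}$-action. This would be established from the definition of $\cF_P^G(M,V)$ as the subspace of $\Ind^G_P(W' \otimes_K V)$ killed by $\frd$: the defining operators come from $U(\frg) \otimes_{U(\frp)} W$ and pair only with the $W'$-factor, leaving the $V$-coefficient untouched. Dualising this, and using that locally analytic induction commutes with completed tensor product in its coefficients, together with the presentation $\cF_P^G(M)' \simeq \varprojlim_r D(G_0)_r \otimes_{U(\frg,P_0)_r} M_r$ recalled in \eqref{isocoadm}, yields the decomposition above at the level of Fréchet spaces.

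Once this identification is in hand, taking $H^0(\mathfrak{u},-)$ commutes with the completed tensor product by $V'$. Indeed, $V'$ is a nuclear Fréchet space (being the strong dual of a space of compact type), so the functor $-\hat{\otimes}_K V'$ is exact on Fréchet spaces and preserves closed subspaces. Combined with the triviality of the $\mathfrak{u}$-action on $V'$, this gives
$$ H^0\bigl(\mathfrak{u},\ \cF_P^G(M)' \,\hat{\otimes}_K\, V'\bigr) \;\simeq\; H^0\bigl(\mathfrak{u},\ \cF_P^G(M)'\bigr) \,\hat{\otimes}_K\, V', $$
which is the claim.

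The main obstacle is the first step: rigorously justifying the tensor-product decomposition of $\cF_P^G(M,V)'$. The difficulty is to thread the construction through the Fr\'echet--Stein formalism of $D(G_0)$, showing that the $V'$-factor passes compatibly through each Banach completion $D(G_0)_r \otimes_{U(\frg,P_0)_r} M_r$ and then through the inverse limit in $r$. The natural route is to write $V' = \varprojlim_n V_n'$ with $V_n = V^{H_n}$ finite-dimensional for a shrinking family of compact opens $H_n \subset L_P$, at which level the decomposition is transparent, and to verify that this inverse limit commutes with the completions entering \eqref{isocoadm}. Once this bookkeeping is carried out, the rest of the argument is formal.
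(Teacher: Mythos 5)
Your proposal is correct in substance and rests on the same two ideas as the paper's proof --- smoothness of $V$ makes the $\fru$-action blind to the $V$-factor, and one reduces to finite-dimensional pieces of $V$ before dualizing --- but your execution of the first step is heavier than necessary, and the comparison is instructive. The paper never invokes the Fr\'echet--Stein presentation \eqref{isocoadm} in this lemma: it simply observes that, since $\fru$ acts trivially on $V$, one has $\cF^G_P(M,V)=\cF^G_P(M)\otimes_K V$ as $\fru$-modules, writes $V=\varinjlim_n V_n$ over arbitrary finite-dimensional subspaces (no $L_P$-stability, and no use of admissibility, is needed at this point), and dualizes using standard duality facts for inductive limits and tensor products with finite-dimensional spaces (\cite[Prop.~1.1.22, 1.1.29]{Em}, \cite[Prop.~16.10]{NFA}) to obtain $\cF^G_P(M,V)'\simeq \cF^G_P(M)'\,\hat{\otimes}_K V'$; so the step you single out as the main obstacle --- threading $V'=\varprojlim_n (V^{H_n})'$ through the Banach completions $D(G_0)_r\otimes_{U(\frg,P_0)_r}M_r$ --- is an unnecessary detour, since that machinery is only needed in the Proposition that follows the lemma, not in the lemma itself. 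For the second step the paper also avoids your appeal to nuclearity: it realizes $H^0(\fru,-)$ as the kernel of the map $d\colon \cF^G_P(M)'\to(\cF^G_P(M)')^{\dim\fru}$, $v\mapsto(\frx_i v)$, and uses exactness of $-\otimes_K V_n$ together with left exactness of $\varprojlim_n$ to identify $\ker(d\,\hat{\otimes}\,1)$ with $H^0(\fru,\cF^G_P(M)')\,\hat{\otimes}_K V'$; your argument via nuclearity of $V'$ is sound in spirit, but all that is really required is preservation of kernels, and the elementary limit argument delivers exactly that without having to track down the relevant non-archimedean exactness statements for $\hat{\otimes}_K$.
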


\begin{proof}
Since the action of $\fru$ is trivial on $V$, there is an identification $\cF^G_P(M,V)=\cF^G_P(M)\otimes_K V$
as $\fru$-modules. Now we write $V=\varinjlim_n V_n$ as the union of finite-dimensional $K$-vector spaces. Then
$\cF^G_P(M,V)=\varinjlim_n \cF^G_P(M)\otimes_K V_n$ as locally convex $K$-vector spaces. By passing to the dual we get
\begin{eqnarray*}
 \cF^G_P(M,V)' & = &  \varprojlim_n (\cF^G_P(M)\otimes_K V_n)' \\
 & = & \varprojlim_n \cF^G_P(M)'\otimes_K (V_n)' 
  =  \cF_P^G(M)'{} \hat{\otimes}_K V'.
\end{eqnarray*}
For the first identity confer \cite[Prop. 1.1.22]{Em} resp. \cite[Prop. 16.10]{NFA}, the second one follows as $V_n$ is
finite-dimensional, the third one is \cite[Prop. 1.1.29]{Em}. Now, the space $H^0(\fru, \cF^G_P(M)')$ is the kernel of the map 
$$d: \, \cF^G_P(M)' \rightarrow (\cF^G_P(M)')^{\dim \fru}$$
given by $v \mapsto (\frx_i v)$ where $(\frx_i)$ is a basis of $\fru$. By the exactness of the tensor product $- \otimes_K V_n$ and the left exactness 
of the projective limit, the space $H^0(\fru, \cF^G_P(M)') \hat{\otimes}_K V'$ is the kernel of the map
$d \hat{\otimes} 1 : \, \cF^G_P(M)' \hat{\otimes}_K V' \rightarrow (\cF^G_P(M)' \hat{\otimes}_K V')^{\dim \fru}$. The result follows.
\end{proof}

Now we can prove the main result of this section. Until the end of the section we will assume as in \cite{OS2} that  $p>2$ if the root system of $G$ contains a factor of
  type $B,C,F_4$ resp.  $p>3$ if a factor of type $G_2$ occurs.

\begin{Proposition}
    Let $M$ be a simple object of $\cO^\frp_{alg}$ such that $P$ is maximal
  for $M,$ and let $V$ be a smooth admissible representation of
  $L_P$. Let $\lambda \in X^\ast(T)$ be the highest weight of $M$, so that
  $M \simeq L(\lambda)$. Then there are $T$-equivariant isomorphisms
  \begin{equation}\label{first_isom}
    H^0(U, \cF_P^G(M,V)')=\lambda \otimes_K J_{U \cap L_P}(V)',
  \end{equation}
  and
    \begin{equation}
      \overline{H}_0(U,\cF_P^G(M,V))=\lambda' \otimes_K J_{U \cap L_P}(V),
  \end{equation}
  where $J_{U \cap L_P}$ is the usual Jacquet functor for the unipotent subgroup
  $U \cap L_P \subset L_P$.
\end{Proposition}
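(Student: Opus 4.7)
The plan is to prove both isomorphisms by reducing first to the case $V=\mathbf{1}$, performing the central computation on the coadmissible module $\cF_P^G(M)'$, and finally deducing the homology statement by duality.

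For the reduction, I separate the roles of $M$ and $V$. Lemma \ref{Lemma_identity} furnishes the identification $\cF_P^G(M,V)' \simeq \cF_P^G(M)' \hat{\otimes}_K V'$ as topological $\fru$-modules, the $\fru$-action on $V'$ being trivial by smoothness of $V$. To upgrade this to a $U$-invariants statement, I use the Levi decomposition $U = U_P \cdot (U \cap L_P)$: the subgroup $U_P$ acts trivially on $V$ by the construction of $V$ as a $P$-module, so continuous $U$-invariants in $V'$ reduce to $(U \cap L_P)$-invariants, which for a smooth admissible $L_P$-representation are canonically identified with $J_{U\cap L_P}(V)'$. The first isomorphism then reduces to showing $H^0(U, \cF_P^G(M)') \simeq K_\lambda$ as a $T$-module.

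For this central computation I rely on the coadmissible presentation (\ref{isocoadm}),
\[
\cF_P^G(M)' \simeq \varprojlim\nolimits_r D(G_0)_r \otimes_{U(\frg,P_0)_r} M_r.
\]
An Iwahori-type factorization of $D(G_0)_r$ along $U_0^- \cdot T_0 \cdot U_0$ should allow me to push the $\fru$-action onto the $M_r$-factor once the relations from $U(\frg,P_0)_r$ are taken into account. By Corollary \ref{ncohomology} one has $H^0(\fru, M_r) = H^0(\fru, M)$, and since $M = L(\lambda)$ is simple with unique highest weight $\lambda$, a direct weight-count identifies this space with the one-dimensional line $K_\lambda$. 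The identification is natural in $r$ and passes to the projective limit, yielding $H^0(\fru, \cF_P^G(M)') \simeq K_\lambda$. The line $K_\lambda$ is visibly fixed by the group $U$, and since group invariants on the strong dual sit inside Lie algebra invariants, we obtain $H^0(U, \cF_P^G(M)') \simeq K_\lambda$.

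The second isomorphism follows by duality. By construction $\overline{H}_0(U, \Pi)$ is the largest Hausdorff quotient of $\Pi$ on which $U$ acts trivially, so its strong continuous dual is canonically $H^0(U, \Pi')$. Applied to $\Pi = \cF_P^G(M,V)$, which is admissible in the sense of Schneider--Teitelbaum, dualization of the first isomorphism produces
\[
\overline{H}_0(U, \cF_P^G(M,V)) \simeq \lambda' \otimes_K J_{U\cap L_P}(V),
\]
with $\lambda'$ the dual $T$-character of $\lambda$. I expect the main obstacle to be the core computation of the second paragraph: rigorously reducing the $\fru$-invariants of $D(G_0)_r \otimes_{U(\frg,P_0)_r} M_r$ to those of $M_r$, and checking compatibility with the transition maps as $r$ varies. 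This is where the maximality of $P$ for $M$ enters, ruling out spurious invariant contributions from non-highest-weight vectors; the technical input will be parallel in nature to Proposition \ref{non-trivial intersection with U(s)-invariant submodule}.
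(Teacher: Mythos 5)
There is a genuine gap at the heart of your argument: the claim that ``an Iwahori-type factorization of $D(G_0)_r$ along $U_0^-\cdot T_0\cdot U_0$ should allow me to push the $\fru$-action onto the $M_r$-factor,'' followed by an appeal to Corollary \ref{ncohomology}, is precisely the hard part of the proposition and is not an automatic consequence of anything you cite. What actually has to be shown is $H^0(\fru, D(G_0)\otimes_{U(\frg,P_0)}M)\simeq H^0(\fru,M)$, and the paper's proof of this requires three separate inputs that your sketch omits. First, one decomposes along the Iwahori--Bruhat decomposition $G_0=\coprod_{w\in W^I}IwP_0$, so the module splits into pieces $D(I)\otimes_{U(\frg,I\cap wP_0w^{-1})}M^w$. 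Second, for $w\neq 1$ one must prove that the $\fru$-invariants of these pieces vanish; this is done by exhibiting a nonzero $\frx\in\fru_P^-\cap\mathrm{Ad}(w^{-1})\fru$ via a root-theoretic argument and using that elements of $\fru_P^-$ act injectively on $M$ and on $M_r$ (\cite[Cor.~8.5, Thm.~5.1]{OS2}) --- this is exactly where the maximality of $P$ for $M$ enters, and it is not ``parallel'' to Proposition \ref{non-trivial intersection with U(s)-invariant submodule}, which only supplies the passage from $M_r$ back to $M$. Third, even for the identity coset, $D(I)_r\otimes_{U(\frg,P_0)_r}M_r$ is a \emph{finite direct sum} $\bigoplus_u\delta_u\otimes M_r$ over $u\in U_{P,0}^-$, with $\fru$ acting on the $u$-summand through $\mathrm{Ad}(u^{-1})$; Corollary \ref{ncohomology} only controls the untwisted summand, and killing the twisted ones requires the $U_P^-$-action on the formal completion $\hat M$, the identity $H^0(\fru,\hat M)=K_\lambda$, and the fact that $u^{-1}v^+\notin M_r$ for $u\notin U_{P,0}^-\cap U(\frg,P_0)_r$ (Step 2 of the proof of \cite[Thm.~5.1]{OS2}). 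Since you explicitly defer this computation (``I expect the main obstacle to be the core computation''), the essential content of the proposition is missing from the proposal.

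A secondary but real flaw is the order of operations in your reduction to $V=\mathbf{1}$. Lemma \ref{Lemma_identity} gives $\cF_P^G(M,V)'\simeq\cF_P^G(M)'\hat\otimes_K V'$ only as $\fru$-modules (equivalently, it identifies $H^0(\fru,-)$ with $H^0(\fru,\cF_P^G(M)')\hat\otimes_K V'$); it is not a $U$-equivariant tensor decomposition, so you cannot split $H^0(U,-)$ across the two factors and treat $H^0(U,\cF_P^G(M)')$ and the $(U\cap L_P)$-invariants of $V'$ separately, as your first paragraph does. The correct sequence, as in the paper, is to compute the $\fru$-invariants first, identify them with $K_\lambda\otimes_K V'$, observe that this subspace of the genuine dual is $U$-stable, and only then take $U$-invariants, obtaining $K_\lambda\otimes_K J_{U\cap L_P}(V)'$. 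Your duality step for the second isomorphism (reflexivity of the compact-type quotient $\overline{H}_0$) matches the paper and is fine.
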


\begin{proof}
    The underlying topological space of $\cF_P^G(M,V)$ is of compact
  type. As the category of locally convex vector spaces of compact type is stable
  by forming Hausdorff quotients, the underlying topological vector space of $\overline{H}_0(U,
  \cF_P^G(M,V))$ is reflexive. As $H^0(U, \cF_P^G(M,V)')$ is the
  topological dual of $\overline{H}_0(U, \cF_P^G(M,V))$, it is sufficient
  to prove the first isomorphism (\ref{first_isom}).

  Let's begin by determining $H^0(\frak{u}, \cF_P^G(M,V)')$.  The Iwahori decomposition $G_0=\coprod_{w \in W^I} Iw P_0$ induces a
  decomposition
  \begin{equation*}
    D(G_0) \otimes_{U(\frg,P_0)} M  \simeq \bigoplus_{w \in W^I}
    D(I) \otimes_{U(\frg, I \cap wP_0w^{-1})} M^w.
  \end{equation*}
  Here $M^w$ denotes the module $M$ with the twisted action given by congugation with $w$. For each $w \in W^I$, we have
  \begin{multline*}
    H^0(\fru, D(I) \otimes_{U(\frg, I \cap wP_0w^{-1})} M^w)
    \simeq H^0(\mathrm{Ad}(w^{-1}) \fru, D(w^{-1}Iw) \otimes_{U(\frg,
      w^{-1}Iw \cap P_0)} \otimes M ).
  \end{multline*}
  Now we apply the precedent discussion with $H=w^{-1}Iw$. Set
  $\frn=\mathrm{Ad}(w^{-1})\fru$. 

  First we consider the case  $w \neq 1$. By \cite[Lemma
    $3.3.2$]{OS1}, there is an Iwahori decomposition $w^{-1}Iw=(U_{P,0}^-
    \cap w^{-1}Iw)(P_0 \cap w^{-1}Iw)$, hence there is by \cite[1.4.2]{K} a finite number of
    elements $u$ in $U^-_{P,0}$ such that $D(H)_r=\bigoplus \delta_u \cdot
    U(\frg, P_H)_r$, so that
  \begin{equation*} \cM_r \simeq \bigoplus_u \delta_u \otimes M_r
  \end{equation*}
  and the action of $\frx \in \frn$ is given by
  \begin{equation*}
    \frx \cdot \sum \delta_u \otimes m_u=\sum \delta_u \otimes
    (\mathrm{Ad}(u^{-1}) \frx )m_u.
  \end{equation*}
  Now we can find a non-trivial element $\frx \in \frak{u}_P^- \cap
  \mathrm{Ad}(w^{-1}) \frak{u}=\fru_P^- \cap \frn$. Indeed, the set $\Phi^- \cap
  w^{-1}(\Phi^+)$ contains an element of $\Phi^- \backslash \Phi_I^-$. For
  that, choose $\beta \in \Phi^+ \backslash \Phi_I^+$ such that
  $w^{-1}\beta \in \Phi^-$. This is possible since $w \notin W_I$. Then we
  have $w^{-1} \beta \notin \Phi_I^-$ since $W^I$ is exactly the set of $w$
  such that $w(\Phi_I^+) \subset \Phi^+$. Now $\mathrm{Ad}(u^{-1})\frx \in
  \fru_P^-$ since $u \in U_P^-$. By Corollary $8.5$ of \cite{OS2}, elements
  of $\frak{u}_P^-$ act injectively on $M$, and arguing as in Step $1$ of
  the proof of \cite[Theorem $5.1$]{OS2}, they  act injectively on
  $M_r$, as well. We conclude that $H^0(\mathrm{Ad}(u^{-1}) \frn, M_r)=0$ and therefore  $H^0(\frn, \cM_r)=0$. By
identity (\ref{isocoadm}), we
  get $H^0(\frn, \cM)=0$.

  Now consider the case $w=1$. Again we may write $D(I)_r=\bigoplus
  \delta_uU(\frg,P_0)_r$ for a finite number of $u \in U_{P,0}^-$, so that $D(I)_r
  \otimes_{U(\frg,P_0)_r} M_r=\bigoplus_u \delta_u \otimes M_r$. We will show
  that if $u \notin U_{P,0}^- \cap U(\frg,P_0)_r$, then
  $H^0(\mathrm{Ad}(u^{-1}) \fru, M_r)=0$. Here we will use Step
  $2$ in the proof of \cite[Theorem $5.1$]{OS2}. Let $\hat{M}$ be the
  formal completion of $M$, i.e. $\hat{M}=\prod_{\mu} M_{\mu}$ which is a
  $\frg$-module. The action of $\fru_P^-$ can be
  extended to an action of $U_P^-$ as explained in \cite[\S 5]{OS2}. If $\frx \in \frg$ and $u \in U_P^-$,
  the action of $\mathrm{ad}(u) \frx$ on $M_r$ is the restriction of the composite
  $u \circ \frx \circ u^{-1}$ on $\hat{M}$. As a consequence, we get
  $$H^0(\mathrm{ad}(u^{-1})\fru, M_r)=M_r \cap u^{-1} \cdot H^0(\fru, \hat{M}).$$ But
  $$H^0(\fru, \hat{M})=H^0(\fru, M)=Kv^+=K_\lambda$$ 
  where $v^+$ is a highest weight vector of $M$.  So if
  $H^0(\mathrm{ad}(u^{-1}) \fru, M_r) \neq 0$, we have $u^{-1}v^+ \in M_r$. By the proof of \cite[Theorem $5.1$]{OS2}, this
  gives a contradiction if $u \notin U_P^- \cap U_r(\frg, P_0)$. Hence by using identity (\ref{isocoadm}), we obtain finally an isomorphism
  \begin{equation*}H^0(\frak{u}, D(I) \otimes_{U(\frg,P_0)} M )
    \simeq H^0(\frak{u}, M).
  \end{equation*}
  By combining the result above together with Lemma \ref{Lemma_identity}, we get thus an isomorphism
  \begin{equation*}H^0(\frak{u}, \cF_{P}^G(M,V')) \simeq H^0(\frak{u}, M)
    \otimes_K V' \simeq K_\lambda
    \otimes_K V'\end{equation*}

  Now $H^0(U, \cF_P^G(M,V)')$ is a subspace of $H^0(\fru, \cF_P^G(M,V)')$ and this latter one is stable by the action of $U.$
  Thus we deduce that
  \begin{eqnarray*}
   H^0(U, \cF_P^G(M,V)') & = &  H^0(U,H^0(\fru, \cF_P^G(M,V)')) \\
	                 & = &  H^0(U, K_\lambda\otimes_K V') \\
			 & = & K_\lambda\otimes_K J_{U\cap L_P}(V)'.
  \end{eqnarray*}
\end{proof}

Next we formulate the main result of this section. For this recall that by Lemma $X.4.6$ of \cite{BoWa} the smooth induction $i^G_B$ has
Jordan-H\"older factors $v^G_{P_I}$ indexed by subsets $I \subset \Delta$
which appear all with multiplicity one. Moreover, by $X.3.2$, $(1)$ to
$(5)$, of \cite{BoWa}, if $Z$ is an irreducible subquotient of $i_B^G$,
then there is a smooth character $\delta$ of $T$ such that $Z$ is
the unique non-zero irreducible subrepresentation of $i_B^G(\delta)$ and
$\delta$ contributes to $J_U(Z)$.

\begin{Theorem}\label{isomorphic}
  Let $L(\lambda_1)$ and $L(\lambda_2)$ be two simple objects in the
  category $\mathcal{O}_{alg}$. For $i=1,2$, let $P_i$ be a std psgp
  maximal for $L(\lambda_i)$ and let $V_i$ be a simple
  subquotient of the smooth parabolic induction $i_B^{P_i}$. Then the two
  irreducible representations $\cF_{P_1}^G(L(\lambda_1), V_1)$ and
  $\cF_{P_2}^G(L(\lambda_2),V_2)$ are isomorphic if and only if $P_1=P_2$,
  $V_1=V_2$ and $\lambda_1=\lambda_2$.
\end{Theorem}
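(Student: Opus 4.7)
The ``if'' direction is immediate from the construction of the bifunctor $\cF^G_P$. For the converse, suppose we are given an isomorphism
$$\cF^G_{P_1}(L(\lambda_1),V_1)\simeq \cF^G_{P_2}(L(\lambda_2),V_2)$$
of $G$-representations. The plan is to apply the functor $H^0(U,(-)')$ and use the preceding Proposition to transfer the problem to the Jacquet-module level. This produces an isomorphism of continuous $T$-representations
$$K_{\lambda_1}\otimes_K J_{U\cap L_{P_1}}(V_1)' \simeq K_{\lambda_2}\otimes_K J_{U\cap L_{P_2}}(V_2)',$$
where on each side $T$ acts on the left factor through the algebraic character $\lambda_i$ and on the right factor through a smooth representation.

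The next step exploits the uniqueness of the algebraic/smooth decomposition of a locally analytic character of $T$: an algebraic character of $T$ that is also locally constant must be trivial, because its derivative on $\frt$ is both the weight itself and zero. The weights of $T$ on either side of the isomorphism are locally analytic characters of the shape $\lambda_i\cdot\chi$ for smooth characters $\chi$ of $T$; comparing them forces $\lambda_1=\lambda_2$. Cancelling the common line $K_{\lambda_1}=K_{\lambda_2}$ then yields an isomorphism of smooth $T$-representations $J_{U\cap L_{P_1}}(V_1)' \simeq J_{U\cap L_{P_2}}(V_2)'$. Since by assumption $P_i$ is the unique std psgp maximal for $L(\lambda_i)$, the equality $\lambda_1=\lambda_2$ automatically forces $P_1=P_2=:P$.

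To finish I would show $V_1\simeq V_2$ using classical smooth representation theory. Applying the result $X.3.2$ of \cite{BoWa} recalled just before the Theorem, but to the reductive Levi $L_P$ in place of $G$, yields a smooth character $\delta_1$ of $T$ such that $V_1$ is the unique irreducible subrepresentation of the smooth induction $i^{L_P}_{B\cap L_P}(\delta_1)$. By Casselman's adjunction (smooth Frobenius reciprocity) $\delta_1$ appears as a quotient of $J_{U\cap L_P}(V_1)$; the isomorphism of smooth $T$-representations established above transports this quotient to $J_{U\cap L_P}(V_2)$. A second application of Casselman's adjunction embeds $V_2$ into $i^{L_P}_{B\cap L_P}(\delta_1)$, and the uniqueness clause then forces $V_1\simeq V_2$.

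The principal subtlety is the uniqueness of the algebraic/smooth factorisation of a locally analytic character of $T$ (elementary, but needs to be stated); the last step is a routine application of Casselman's adjunction, the only care being to transport the relevant Jacquet-module data as a \emph{quotient} rather than merely as a subquotient.
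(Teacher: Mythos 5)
Your argument is correct and follows essentially the same route as the paper: both rest on the Proposition computing $H^0(U,\cF_P^G(M,V)')=\lambda\otimes_K J_{U\cap L_P}(V)'$, on the uniqueness of the algebraic/smooth factorisation of a locally analytic character of $T$ (giving $\lambda_1=\lambda_2$, hence $P_1=P_2$), and on \cite[X.3.2]{BoWa} together with Casselman/Frobenius adjunction to conclude $V_1\simeq V_2$. The only, harmless, difference is that you apply the Proposition symmetrically to both sides of the isomorphism and compare the resulting $T$-modules directly, whereas the paper embeds $\cF_{P_2}^G(L(\lambda_2),V_2)$ into the locally analytic principal series $\Ind_B^G(\lambda_2'\otimes_K\delta_2)$ and uses Frobenius reciprocity to obtain a non-zero $T$-equivariant map out of $\overline{H}_0(U,\cF_{P_1}^G(L(\lambda_1),V_1))$.
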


\begin{proof}
  Suppose that there is a non-trivial isomorphism between the irreducible
  representations $\cF_{P_1}^G(L(\lambda_1), V_1)$ and
  $\cF_{P_2}^G(L(\lambda_2),V_2)$. Let $\delta_2$ be a smooth character of
  $T$ such that $V_2\hookrightarrow i_B^{P_2}(\delta_2)$. By the sequence of
  embeddings
  \begin{equation*}
    \cF_{P_2}^G(L(\lambda_2),V_2) \subset \cF_{P_2}^G(L(\lambda_2), i_B^{P_2}(\delta_2)) \simeq \cF_B^G(L(\lambda_2), \delta_2) \subset
    \Ind_B^G(\lambda_2' \otimes_K \delta_2),
  \end{equation*}
  we obtain a non-trivial map $\cF_{P_1}^G(L(\lambda_1), V_1) \rightarrow
  \Ind_B^G(\lambda_2' \otimes \delta_2)$ and by Frobenius reciprocity a
  non-trivial $T$-equivariant map
  \begin{equation*}
    \overline{H}_0(U, \cF_{P_1}^G(M_1, V_1))=\lambda_1' \otimes_K J_{U \cap L_{P_1}}(V_1) \rightarrow
    \lambda_2' \otimes \delta_2.
  \end{equation*}
  It follows that $\lambda_1=\lambda_2$ and $P_1=P_2$. By \cite[X.3.2.(1)]{BoWa}, we know that $J_{U \cap L_{P_1}}(V_1)$ is a direct sum of
  smooth characters of $T$. By Frobenius reciprocity, these are exactly the
  smooth characters $\delta$ such that $V_1$ is an irreducible subobject of
  $i_B^{P_2}(\delta)$. As $\delta_2$ is one of them, we can conclude
  by \cite[X.3.2.(4)]{BoWa} that $V_1=V_2$.
\end{proof}

\section{The locally analytic Steinberg representation}

The section deals with the proof of our main theorem. Here we start with the proof of the acyclicity of the evaluated locally analytic
Tits complex.

As before let $P=P_I$ be a std psgp attached to a subset $I\subset \Delta$.  Let
$$I^G_P=I^G_P({\bf 1})=\Ind^G_{P}({\bf 1})$$ be the locally analytic $G$-representation induced from the trivial representations
${\bf 1}.$ If $Q\supset P$ is another parabolic subgroup, we get an injection $I^G_Q \hookrightarrow I^G_P.$
We denote by
$$V^G_P = I^G_P / \sum_{Q\supsetneq P} I^G_Q$$
the generalized locally analytic  Steinberg representation associated to $P$.
For $P=G,$ we have $V^G_G =I^G_G= {\bf 1}.$

The next result is the locally analytic analogue of the classical situations, cf. \cite[Ch.X,4]{BoWa}, \cite{Leh}, \cite{DOR}.

\begin{Theorem}
Let $I\subset \Delta$. Then the following complex is an acyclic resolution of $V^G_{P_I}$ by locally analytic $G$-representations,
\begin{equation}
0 \rightarrow  I^G_G \rightarrow \bigoplus_{I\subset K \subset \Delta \atop |\Delta\setminus K|=1}I^G_{P_K} \rightarrow \bigoplus_{I\subset K \subset \Delta \atop |\Delta\setminus K|=2}I^G_{P_K}
\rightarrow \dots \rightarrow \bigoplus_{I\subset K \subset \Delta
  \atop |K\setminus I|=1}I^G_{P_K}\rightarrow I^G_{P_I} \rightarrow V^G_{P_I}\rightarrow 0.
\end{equation}

\noindent {\rm Here the differentials $d_{K',K}:I^G_{P_{K'}} \rightarrow I^G_{P_K}$ are defined as follows.
For two subsets $K\subset K'$ of $ \Delta$, we
let
$$p_{K',K}:I^G_{P_{K'}} \longrightarrow I^G_{P_K}$$
be the natural homomorphism induced by the surjection $G/P_K \rightarrow G/P_{K'}.$
For arbitrary subsets  $K,K' \subset \Delta$, with $|K'|-|K|=1$
and $K'=\{k_1<\ldots <k_r\},$ we put
$$d_{K',K}=\left\{ \begin{matrix} (-1)^i p_{K',K} & K' =
K  \cup \{k_i\} \cr 0 & K \not\subset K' \end{matrix} \right. .$$}
\end{Theorem}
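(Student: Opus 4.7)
First I would identify the cokernel at the end of the complex. The rightmost differential $\bigoplus_{|K\setminus I|=1} I^G_{P_K} \to I^G_{P_I}$ is, up to signs, the sum of the natural inclusions, and therefore has image $\sum_{|K\setminus I|=1} I^G_{P_K}$. This already exhausts $\sum_{K \supsetneq I} I^G_{P_K}$: for any $K \supsetneq I$ and any choice of $\alpha \in K \setminus I$, the inclusion $P_{I\cup\{\alpha\}} \subset P_K$ forces $I^G_{P_K} \subset I^G_{P_{I\cup\{\alpha\}}}$, and the latter is already one of the summands. The cokernel is therefore $V^G_{P_I}$ by definition, so it remains only to prove interior acyclicity.

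For the acyclicity in the interior degrees my plan is to reduce to the classical acyclicity of the smooth Tits complex
$$0 \to i^G_G \to \bigoplus_{|\Delta \setminus K|=1,\,K \supset I} i^G_{P_K} \to \cdots \to i^G_{P_I} \to v^G_{P_I} \to 0,$$
which is a standard result (cf. \cite{BoWa,DOR}). The bridge between the smooth and locally analytic settings is the parabolic BGG resolution recalled in Section~2: applying the exact contravariant functor $\cF^G_{P_K}(-,{\bf 1})$ to the $K$-parabolic BGG resolution of the trivial $\frg$-module produces, for each $K \supset I$, an acyclic resolution
$$0 \to i^G_{P_K} \to I^G_{P_K} \to \bigoplus_{w\in{}^KW,\,\ell(w)=1} \Ind^G_{P_K}(V_K(w\cdot 0)') \to \cdots \to \Ind^G_{P_K}(V_K({}^Kw_0\cdot 0)') \to 0$$
relating $I^G_{P_K}$ to its smooth subspace $i^G_{P_K}$.

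I would then splice these resolutions with the Tits complex in the $K$-direction to form a double complex, and filter by BGG-depth. The ``depth $0$'' row is exactly the smooth Tits complex, interior-acyclic by the classical result, while the higher-depth rows are Tits-type complexes with coefficients in the twisted inductions $\Ind^G_{P_K}(V_K(w\cdot 0)')$. The interior acyclicity of these auxiliary complexes should follow by reducing via Frobenius reciprocity to the corresponding problem for the Levi subgroup $L_K$, where the classical smooth Tits acyclicity applies once more; a spectral sequence argument then yields the acyclicity of the total complex. The hard part will be the combinatorial bookkeeping of this double complex---the index set ${}^KW$ and the shifted weights $w\cdot 0$ both depend on $K$, so the BGG rows do not organise as a direct sum over a fixed family---together with establishing the interior acyclicity of the twisted-coefficient Tits complexes.
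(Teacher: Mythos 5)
Your overall architecture is the same as the paper's: splice the $K$-parabolic BGG resolutions (applied via the exact functor $\cF^G_{P_K}$) with the smooth Tits complex into a double complex whose depth-zero row is the classical smooth Tits complex, and then prove interior acyclicity of the higher rows. Your identification of the cokernel is also fine. The problem is the step you yourself flag as the hard part, and as stated it does not close. For fixed $w$, the relevant sub-row consists of the terms $\Ind^G_{P_K}(V_K(w\cdot 0)')$ for those $K\supset I$ with $w\in{}^KW$; the key combinatorial fact (which you leave open) is that this set of $K$ is exactly the interval $I\subset K\subset I(w)$, where $I(w)$ is the maximal subset with $w\in{}^{I(w)}W$. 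Using transitivity of induction, $\Ind^G_{P_K}=\Ind^G_{P_{I(w)}}\circ\Ind^{P_{I(w)}}_{P_K}$, the $w$-part of the row is $\Ind^G_{P_{I(w)}}$ applied to a Tits-type complex for the Levi $L_{I(w)}$ --- but that complex is built from \emph{locally analytic} inductions of the algebraic representations of highest weight $w\cdot 0$, not from smooth inductions. So ``the classical smooth Tits acyclicity applies once more'' is not correct: what you need for the Levi is precisely the theorem you are trying to prove, in a \emph{twisted} form, since the shifted weights $w\cdot 0$ are nonzero.

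This is why the paper proves the stronger statement for all dominant $\lambda$ (the twisted generalized Steinberg representations $V^G_{P_I}(\lambda)$, Theorem \ref{resolution}) and argues by induction on the semisimple rank ${\rm rk}_{\rm ss}(G)$: the rows of the double complex for the group $G$ and weight $\lambda$ reduce, via the interval $I\subset K\subset I(w)$ and exactness of $\Ind^G_{P_{I(w)}}$, to instances of the same twisted statement for the proper Levi $L_{I(w)}$ and weight $w\cdot\lambda$, which the induction hypothesis covers. If you keep your formulation with $\lambda=0$ only, the induction cannot be set up, because the auxiliary complexes involve the nontrivial weights $w\cdot 0$. So to repair your argument you must (i) strengthen the statement to arbitrary dominant $\lambda$, (ii) prove the interval description of $\{K: w\in{}^KW\}$, and (iii) replace the appeal to smooth Tits acyclicity for the Levi rows by the rank-induction hypothesis (the smooth result is used only for the depth-zero row, tensored with $V(\lambda)'$).
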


\vspace{0.5cm}

More generally, we will prove a variant of the above theorem. For this, let $\lambda\in X_+$ be a dominant weight.
For a std psgp $P=P_I\subset G$, we consider the finite-dimensional algebraic $P$-representation $V_I(\lambda)=V_P(\lambda)$
with highest weight $\lambda.$ We set
$$I^G_{P}(\lambda):=\Ind^G_{P}(V_P(\lambda)').$$
In particular, we get $I^G_G(\lambda)= V(\lambda)'.$ If $Q \subset G$ is
another parabolic subgroup with $P\subset Q$, then there is an injection
$$I^G_{Q}(\lambda) \hookrightarrow I^G_{P}(\lambda) $$
similarly as above for $V(\lambda)={\bf 1}.$ More precisely,  by the
transitivity of parabolic induction we deduce that $I_P^G(\lambda) \simeq
\Ind_Q^G(\Ind_P^{Q}(\lambda))$. As $V_Q(\lambda)'$ is a subobject of
$\Ind_P^{Q}(\lambda)$ we get the desired injection.  We define analogously as above 
the twisted generalized locally analytic Steinberg representation by
$$V^G_{P}(\lambda):= I^G_{P}(\lambda)/ \sum_{Q \supsetneq P} I^G_{Q}(\lambda).$$

\vspace{0.5cm}

\begin{Theorem}\label{resolution}
Let $\lambda \in X_+$ and let $I\subset \Delta$. Then the following complex is an acyclic resolution of $V^G_{P_I}(\lambda)$ by locally analytic $G$-representations,
\begin{multline}
0 \rightarrow  I^G_G(\lambda) \rightarrow \bigoplus_{I\subset K \subset \Delta \atop |\Delta\setminus K|=1}I^G_{P_K}(\lambda)
\rightarrow \bigoplus_{I\subset K \subset \Delta \atop |\Delta\setminus K|=2}I^G_{P_K}(\lambda)
\rightarrow \dots \\ \dots \rightarrow \bigoplus_{I\subset K \subset \Delta
  \atop |K\setminus I|=1}I^G_{P_K}(\lambda)\rightarrow I^G_{P_I}(\lambda) \rightarrow V^G_{P_I}(\lambda)\rightarrow 0.
\end{multline}
\end{Theorem}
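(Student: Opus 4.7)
The plan is to realise the complex as the augmented inclusion--exclusion (Koszul--Mayer--Vietoris) complex attached to a family of closed $G$-subspaces of $I^G_{P_I}(\lambda)$ indexed by the Boolean lattice of subsets of $\Delta\setminus I$, and to derive its acyclicity from an intersection lemma combined with a standard combinatorial induction.

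\emph{Step 1 (Embeddings).} As already noted in the paragraph preceding the theorem, the transitivity of locally analytic parabolic induction together with the embedding $V_{P_K}(\lambda)'\hookrightarrow \Ind_{P_I}^{P_K}(V_I(\lambda)')$ realises each $I^G_{P_K}(\lambda)$ as a closed $G$-invariant subspace of $I^G_{P_I}(\lambda)$, compatibly with the maps $p_{K',K}$. Under these identifications the differentials $d_{K',K}$ of the complex become the standard signed-sum differentials of a Koszul--Mayer--Vietoris complex.

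\emph{Step 2 (Intersection lemma).} The key identity is
\[
  I^G_{P_K}(\lambda)\cap I^G_{P_{K'}}(\lambda) \;=\; I^G_{P_{K\cup K'}}(\lambda)\quad\text{inside } I^G_{P_I}(\lambda)
\]
for all $I\subseteq K,K'\subseteq\Delta$. The inclusion $\supseteq$ is clear. For $\subseteq$, identify $I^G_{P_I}(\lambda)$ with locally analytic sections of the $G$-equivariant vector bundle $G\times^{P_I}V_I(\lambda)'$ on the compact $p$-adic manifold $G/P_I$; then $I^G_{P_K}(\lambda)\subseteq I^G_{P_I}(\lambda)$ consists exactly of those sections descending along the smooth surjection $G/P_I\twoheadrightarrow G/P_K$. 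A section descending to both $G/P_K$ and $G/P_{K'}$ is invariant under the right actions of both $P_K$ and $P_{K'}$, hence under the subgroup they generate, which equals $P_{K\cup K'}$ (this follows from $W_{K\cup K'}=\langle W_K,W_{K'}\rangle$ together with the Bruhat decomposition $P_J=BW_JB$); such a section therefore descends all the way to $G/P_{K\cup K'}$.

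\emph{Step 3 (Combinatorial acyclicity).} Set $W_\alpha:= I^G_{P_{I\cup\{\alpha\}}}(\lambda)$ for $\alpha\in\Delta\setminus I$. Iterating Step 2 gives $\bigcap_{\alpha\in T}W_\alpha = I^G_{P_{I\cup T}}(\lambda)$ for every non-empty $T\subseteq\Delta\setminus I$, and $\sum_\alpha W_\alpha=\sum_{Q\supsetneq P_I}I^G_Q(\lambda)$. The complex of the theorem is thus precisely the augmented Koszul--Mayer--Vietoris complex for the configuration $\bigl(I^G_{P_I}(\lambda);\,W_\alpha,\,\alpha\in\Delta\setminus I\bigr)$, with cokernel $V^G_{P_I}(\lambda)$ by the very definition of the latter. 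Acyclicity is then proved by induction on $n:=|\Delta\setminus I|$: for $n\leq 1$ it is immediate, and for $n\geq 2$ one fixes $\alpha_0\in\Delta\setminus I$ and splits subsets $T\subseteq\Delta\setminus I$ according to whether $\alpha_0\in T$. This produces a short exact sequence of subcomplexes whose outer members are, up to a degree shift, Koszul--Mayer--Vietoris complexes for configurations with only $n-1$ subspaces, namely $\bigl(W_{\alpha_0};\,W_{\alpha_0}\cap W_\alpha,\,\alpha\neq\alpha_0\bigr)$ and $\bigl(I^G_{P_I}(\lambda);\,W_\alpha,\,\alpha\neq\alpha_0\bigr)$, so the inductive hypothesis together with the associated long exact sequence in cohomology yields the desired acyclicity.

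\emph{Main obstacle.} The delicate point is the intersection lemma in Step 2. In the smooth setting it reduces to the elementary observation that a right-$P_K$-invariant and right-$P_{K'}$-invariant locally constant function is right-$P_{K\cup K'}$-invariant; here one must carry out the same descent argument for locally analytic sections of the nontrivial twisted bundle $G\times^{P_I}V_I(\lambda)'$ on $G/P_I$. The purely Weyl-theoretic input $P_{K\cup K'}=\langle P_K,P_{K'}\rangle$ is the same as in the smooth case.
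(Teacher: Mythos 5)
There is a genuine gap, and it lies in Step 3 rather than only in Step 2. Knowing every iterated intersection, i.e.\ $\bigcap_{\alpha\in T}W_\alpha=I^G_{P_{I\cup T}}(\lambda)$ for all $T$, does \emph{not} imply exactness of the inclusion--exclusion complex: for three distinct lines $W_1,W_2,W_3$ in a plane $V$ all higher intersections vanish, yet $0\to 0\to 0\to W_1\oplus W_2\oplus W_3\to V$ has nonzero homology in the middle, so exactness is not a formal consequence of the intersection lattice. Concretely, in your induction the short exact sequence of complexes (terms with $\alpha_0\notin T$ form the subcomplex, those with $\alpha_0\in T$ the quotient) gives a long exact sequence whose connecting map identifies the first homology of the total complex with $\bigl(W_{\alpha_0}\cap\sum_{\alpha\neq\alpha_0}W_\alpha\bigr)\big/\sum_{\alpha\neq\alpha_0}\bigl(W_{\alpha_0}\cap W_\alpha\bigr)$. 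To close the induction you therefore need the distributivity identity $W_{\alpha_0}\cap\sum_{\alpha\neq\alpha_0}W_\alpha=\sum_{\alpha\neq\alpha_0}(W_{\alpha_0}\cap W_\alpha)$, together with its analogues inside each $W_{\alpha_0}$ at every stage; this is precisely the nontrivial content of the theorem and does not follow from the intersection lemma. Even in the smooth, untwisted case the acyclicity of the Tits complex is a representation-theoretic theorem (Borel--Wallach), not a lattice-combinatorial one, so the "standard combinatorial induction" cannot carry the argument.

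Step 2 is also unjustified as stated once $\lambda\neq 0$: the image of $I^G_{P_K}(\lambda)=\Ind^G_{P_K}(V_K(\lambda)')$ in $I^G_{P_I}(\lambda)$ is induced by the $P_I$-equivariant map $V_K(\lambda)'\to V_I(\lambda)'$, and the bundle $G\times^{P_I}V_I(\lambda)'$ is not pulled back from $G/P_K$; hence its elements are not right-$P_K$-invariant functions, and the step "invariant under $P_K$ and $P_{K'}$, hence under $\langle P_K,P_{K'}\rangle=P_{K\cup K'}$" has no meaning in the twisted setting (the intersection statement may well be true, but some input from the $\cF^G_P$-machinery is needed to prove it, and in any case it would not repair Step 3). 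The paper's proof proceeds quite differently: it resolves each $I^G_{P_K}(\lambda)=\cF^G_{P_K}(M_K(\lambda))$ by the locally analytic parabolic BGG resolution \eqref{parabolic_resolution}, assembles these into a double complex whose first row is the smooth Tits complex tensored with $V(\lambda)'$, and proves exactness of the remaining rows by induction on the semisimple rank, using transitivity of induction to pass to the Levi subgroups $L_w$. It is exactly this BGG/smooth-theoretic input that supplies the nonformal exactness your Koszul--Mayer--Vietoris argument is missing.
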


\begin{proof}
The proof is by induction on the semi-simple rank ${\rm rk}_{\rm ss}(G)=|\Delta|$ of $G.$ We start with the case $|\Delta|=1.$ Then the complex (\ref{resolution}) coincides with
$$0\rightarrow I^G_G(\lambda) \rightarrow I^G_B(\lambda) \rightarrow V^G_B(\lambda)\rightarrow 0$$
and the claim is trivial.

Now, let $|\Delta|>1.$  We consider for any subset $K\subset \Delta$, the resolution $(\ref{parabolic_resolution}):$
\begin{multline*}
0 \leftarrow \Ind^G_{P_K}(V_K(w\cdot \lambda)') \leftarrow   \bigoplus_{w\in{}^KW \atop \ell(w)=\ell(w^K)-1}
\Ind^G_{P_K}(V_K(w\cdot \lambda)')    \leftarrow  \dots  \\ \dots \leftarrow \bigoplus_{w\in{}^KW \atop \ell(w)=1}
\Ind^G_{P_K}(V_K(w\cdot \lambda)') \leftarrow  I^G_{P_K}(\lambda) \leftarrow  i^G_{P_K}(\lambda ) \leftarrow 0.
\end{multline*}
Here we set $i^G_{P_K}(\lambda):=i^G_{P_K} \otimes_K V(\lambda)'.$
We abbreviate for any $w\in{}^KW$ and any integer $i\geq 0,$
$$I^G_{P_K}(w):=\Ind^G_{P_K}(V_K(w\cdot \lambda)'),$$

$$I^G_{P_K}[i]:=  \bigoplus\limits_{w\in{}^KW \atop \ell(w)=i} I^G_{P_K}(w).$$
and
$$V^G_{P_K}(w) = I^G_{P_K}(w) / \sum_{K'\supsetneq K \atop w \in{}^{K'}W} I^G_{P_{K'}}(w).$$


The complexes above induce hence a double complex

\vspace{0.5cm}

\label{bigdoublecomplex}
$
\begin{array}{cccccccc}
0 \rightarrow &   i^G_G(\lambda) &  \rightarrow &  \bigoplus\limits_{I\subset K \subset \Delta \atop |\Delta\setminus K|=1}i^G_{P_K}(\lambda) &
\rightarrow \dots \rightarrow & \bigoplus\limits_{I\subset K \subset \Delta
  \atop |K\setminus I|=1}i^G_{P_K}(\lambda) & \rightarrow & i^G_{P_I}(\lambda)  \\
& \parallel & & \downarrow &   &\downarrow &  &\downarrow  \\
0 \rightarrow &  I^G_G(\lambda) & \rightarrow & \bigoplus\limits_{I\subset K \subset \Delta \atop |\Delta\setminus K|=1}I^G_{P_K}(\lambda)  &
\rightarrow \dots \rightarrow & \bigoplus\limits_{I\subset K \subset \Delta
  \atop |K\setminus I|=1}I^G_{P_K}(\lambda) & \rightarrow & I^G_{P_I}(\lambda) \\
& \downarrow & & \downarrow &   &\downarrow &  &\downarrow \\
0 \rightarrow &  0 & \rightarrow & \bigoplus\limits_{I\subset K \subset \Delta \atop |\Delta\setminus K|=1} I^G_{P_K}[1] &
\rightarrow \dots \rightarrow & \bigoplus\limits_{I\subset K \subset \Delta
  \atop |K\setminus I|=1}  I^G_{P_K}[1] & \rightarrow &  I^G_{P_I}[1] \\
& \downarrow & & \downarrow &   &\downarrow &  &\downarrow  \\
0 \rightarrow &  0 & \rightarrow &  \bigoplus\limits_{I\subset K \subset \Delta \atop |\Delta\setminus K|=1} I^G_{P_K}[2] &
\rightarrow \dots \rightarrow & \bigoplus\limits_{I\subset K \subset \Delta
  \atop |K\setminus I|=1} I^G_{P_K}[2] & \rightarrow &   I^G_{P_I}[2] \\
& \downarrow & & \downarrow &   &\downarrow &  &\downarrow\\
&\vdots  & & \vdots  & &\vdots & & \vdots  \\
& \downarrow & & \downarrow &   &\downarrow &  &\downarrow  \\
0 \rightarrow &  0 & \rightarrow & 0  &
\rightarrow \dots \rightarrow & \bigoplus\limits_{I\subset K \subset \Delta
  \atop |K\setminus I|=1}  I^G_{P_K}[\ell(^Iw)-1] & \rightarrow &  I^G_{P_I}[\ell(^Iw)-1] \\
& \downarrow & & \downarrow &   &\downarrow &  &\downarrow  \\
0 \rightarrow &  0 & \rightarrow & 0 &
\rightarrow \dots \rightarrow & 0  & \rightarrow & I^G_{P_I}(^Iw) . \\
\end{array}
$

\vspace{0.5cm}

For proving our theorem, it suffices to show that each row, except from the second one, is exact apart from the very right hand side.
The upper line satisfies this property, since
it is tensor product of the (generalized) smooth Tits complex by the algebraic representation $V(\lambda)'.$ For $w\in{}^IW$, let $I(w)\subset \Delta$ be the unique maximal subset such that $w\in{}^{I(w)}W$ and set $P_w=P_{I(w)}.$
Hence it suffices to show the exactness of the evaluated sequence
$$ 0 \rightarrow    I^G_{P_w}(w) \rightarrow \dots \rightarrow  \bigoplus\limits_{I\subset K \subset I(w)
  \atop |K\setminus I|=1} I^G_{P_K}(w)  \rightarrow  I^G_{P_I}(w)  \rightarrow V^G_{P_I}(w)  \rightarrow 0 $$
for each $w\in{}^IW.$ This complex can be rewritten as follows. We have $I^G_{P_K}(w)= \Ind^G_{P_w}(I^{P_w}_{P_K}(w)).$ 
Since the induction functor $\Ind^G_{P_w}$ is exact, it suffices to show the exactness of the sequence
$$ 0 \rightarrow    I^{P_w}_{P_w}(w) \rightarrow \dots \rightarrow  \bigoplus\limits_{I\subset K \subset I(w)
\atop |K\setminus I|=1} I^{P_w}_{P_K}(w)  \rightarrow  I^{P_w}_{P_I}(w)  \rightarrow V^{P_w}_{P_I}(w)  \rightarrow 0 $$
where $V^{P_w}_{P_I}(w)=I^{P_w}_{P_I}(w) / \sum_{K\supsetneq I, w \in{}^KW} I^{P_w}_{P_K}(w).$
But we have $I^{P_w}_{P_K}(w)=I^{L_w}_{L_w\cap P_K}(w)$ and  ${\rm rk}_{\rm ss}(L_w) < {\rm rk}_{\rm ss} (G).$ Thus the complex above is by induction hypothesis exact. Hence the claim of our theorem follows.
\end{proof}

As a byproduct of the proof we get the following result (Note that $V^G_{P_I}(^Iw)= I^G_{P_I}(^Iw)$.):

\begin{Corollary}
For any dominant weight $\lambda\in \Lambda_+$ and any subset $I\subset \Delta$, there is an acyclic complex
$$0 \to v^G_{P_I}(\lambda) \to V^G_{P_I}(\lambda) \to \bigoplus\limits_{w\in{}^IW \atop \ell(w)=1} V^G_{P_I}(w) 
\to \cdots \to \bigoplus\limits_{w\in{}^IW \atop \ell(w)=\ell(^Iw)-1 } V^G_{P_I}(w) \to V^G_{P_I}(^Iw) \to 0 .$$
\end{Corollary}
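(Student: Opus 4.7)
The plan is to apply a spectral sequence argument to the double complex $C^{\bullet,\bullet}$ already assembled in the proof of Theorem \ref{resolution}. I index its columns by $p=|\Delta\setminus K|$ for $I\subset K\subset\Delta$ (so $p=0$ corresponds to $K=\Delta$ and $p=n:=|\Delta\setminus I|$ to $K=I$), and its rows so that $C^{\bullet,0}$ is the smooth row $\bigoplus_K i^G_{P_K}(\lambda)$ and $C^{\bullet,q+1}=\bigoplus_K I^G_{P_K}[q]$ for $q\geq 0$, with the convention $I^G_{P_K}[0]=I^G_{P_K}(\lambda)$.

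Two facts gathered in the proof of Theorem \ref{resolution} will be used as input. First, every column of $C^{\bullet,\bullet}$ is exact: the column indexed by $K$ is obtained by applying $\cF^G_{P_K}$ to the parabolic BGG resolution of $V(\lambda)$ with respect to $P_K$, and augmenting the result on top by $i^G_{P_K}(\lambda)=\cF^G_{P_K}(V(\lambda))$ via the $PQ$-formula. Second, every row of $C^{\bullet,\bullet}$ is exact except possibly at the rightmost position $p=n$: the row $q=0$ is the classical smooth Tits acyclicity tensored with $V(\lambda)'$; the row $q=1$ is Theorem \ref{resolution} itself; and for $q\geq 2$ it is established in the proof of Theorem \ref{resolution} by decomposing each row into its $w$-summands for $w\in{}^IW$ of the appropriate length, and reducing to the corresponding statement for the Levi $L_w$ of smaller semisimple rank via the exactness of $\Ind^G_{P_w}$ and induction on $\mathrm{rk}_{\mathrm{ss}}$.

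Given these two inputs, I compare the two spectral sequences converging to the total cohomology of $C^{\bullet,\bullet}$. The column-first spectral sequence has $E_1$-page identically zero by column exactness, whence the total complex is acyclic. In the row-first spectral sequence, $E_1^{p,q}=0$ for $p\neq n$ by row exactness, so $E_1$ is concentrated in the single column $p=n$; all higher differentials $d_r$ ($r\geq 2$) vanish for degree reasons, and thus $E_2^{n,\bullet}=E_\infty^{n,\bullet}=0$. A direct unwinding at $p=n$, using the decomposition into $w$-summands and the inclusion ${}^KW\subset{}^{K'}W$ for $I\subset K'\subset K$, identifies
\begin{equation*}
E_1^{n,0}=v^G_{P_I}(\lambda),\qquad E_1^{n,1}=V^G_{P_I}(\lambda),\qquad E_1^{n,q+1}=\bigoplus_{w\in{}^IW,\,\ell(w)=q}V^G_{P_I}(w)
\end{equation*}
for $q\geq 1$, the differential $d_1$ being induced from the vertical differentials of $C^{\bullet,\bullet}$. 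By construction this $d_1$-complex is exactly the one in the statement, and its acyclicity is the vanishing of $E_2^{n,\bullet}$ just established.

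The main technical point, and the only one that does not reduce to bookkeeping, is the identification of the horizontal cokernel at $p=n$: one needs that the image of $\bigoplus_{|K\setminus I|=1,\,w\in{}^KW}I^G_{P_K}(w)\to I^G_{P_I}(w)$ coincides with $\sum_{K\supsetneq I,\,w\in{}^KW}I^G_{P_K}(w)$. This follows from the transitivity relation $I^G_{P_K}(w)\subset I^G_{P_{K'}}(w)$ whenever $I\subset K'\subset K$, together with the fact that any $K\supsetneq I$ contains some $K'$ with $|K'\setminus I|=1$ and $w\in{}^{K'}W$ (since $w\in{}^KW$ forces $w\in{}^{K'}W$ for any such $K'$).
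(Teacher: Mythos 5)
Your argument is correct and is exactly the argument the paper intends: the Corollary is stated as a byproduct of the proof of Theorem \ref{resolution}, namely the standard double-complex comparison in which the exact columns (the augmented parabolic BGG resolutions pushed through $\cF^G_{P_K}$) force the complex of rightmost horizontal cokernels $v^G_{P_I}(\lambda)$, $V^G_{P_I}(\lambda)$, $\bigoplus_{\ell(w)=q}V^G_{P_I}(w)$ to be exact, and your identification of these cokernels via the inclusions $I^G_{P_K}(w)\subset I^G_{P_{K'}}(w)$ and ${}^KW\subset{}^{K'}W$ for $I\subset K'\subset K$ matches the paper's definition of $V^G_{P_I}(w)$. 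You have merely made explicit the spectral-sequence bookkeeping the authors leave implicit, so there is nothing to correct.
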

\qed

\begin{Example}
Let $G=\GL_2$ and $\lambda=0.$ Then $W=\{1,s\}$ and the above complex is given by
$$0\to v^G_B \to V^G_B \to I^G_B(s)\to 0.$$
This complex was already considered by Morita \cite{Mo} resp.  Schneider and Teitelbaum \cite{ST5} and coincides
with the dual of the $K$-valued
``de Rham'' complex
$$0\to \cO(\cX)/K \to \Omega^1(\cX) \to H^1_{\rm dR}(\cX)\to 0$$ of the Drinfeld half space $\cX=\bbP^1\setminus \bbP^1(L)$.
\end{Example}

\vspace{0.5cm}

\begin{Lemma}
The representation $V^G_P(\lambda)$ has a Jordan-H\"older series of finite length.
\end{Lemma}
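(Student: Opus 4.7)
The plan is to realize $V^G_P(\lambda)$ as a quotient of the induced representation $I^G_P(\lambda)$ and show that $I^G_P(\lambda)$ already has finite length as a locally analytic $G$-representation; passing to the quotient then preserves this finiteness. All the necessary tools have been assembled in Section~2.

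First, I would identify $I^G_P(\lambda) = \Ind^G_P(V_P(\lambda)')$ with $\cF^G_P(M_P(\lambda))$, where $M_P(\lambda) = U(\frg)\otimes_{U(\frp)} V_P(\lambda)$ is the generalized Verma module. Taking $W = V_P(\lambda)$ in the presentation \eqref{surjective_map}, the kernel $\frd$ vanishes, so the very definition of $\cF^G_P$ gives $\cF^G_P(M_P(\lambda)) = \Ind^G_P(V_P(\lambda)')$. Next I would verify that $M_P(\lambda)$ lies in $\Oa^\frp$: the PBW isomorphism $M_P(\lambda) \simeq U(\fru_P^-)\otimes_K V_P(\lambda)$ makes axioms (1)--(3) of $\cO^\frp$ transparent, and algebraicity follows from the fact that $V_P(\lambda)$ is algebraic as an $\bL_P$-representation. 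By classical BGG theory, $M_P(\lambda)$ has a finite Jordan--H\"older series in $\cO$, with simple constituents $L(\mu)$ lying in $\Oa$.

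Finally, I would invoke the construction recalled at the end of Section~2: applying the exact functor $\cF^G_P$ to a Jordan--H\"older series of $M_P(\lambda)$ yields a finite filtration \eqref{filtration_neu} of $I^G_P(\lambda)$ by closed subrepresentations with graded pieces $\cF^G_P(M^i/M^{i+1})$; choosing, for each simple $M^i/M^{i+1}$, a standard parabolic $Q_i \supset P$ maximal for it, the PQ-formula rewrites this graded piece as $\cF^G_{Q_i}(M^i/M^{i+1}, i^{Q_i}_P)$, and a further refinement using the finite-length Jordan--H\"older series of the smooth induced representation $i^{Q_i}_P$ (whose irreducible constituents are the generalized smooth Steinberg representations $v^{Q_i}_R$) produces a finite filtration by closed subspaces whose graded pieces $\cF^G_{Q_i}(M^i/M^{i+1}, v^{Q_i}_R)$ are topologically irreducible. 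Passing to the quotient $V^G_P(\lambda) = I^G_P(\lambda)/\sum_{Q \supsetneq P} I^G_Q(\lambda)$ then inherits a Jordan--H\"older series of finite length. I do not expect any serious obstacle; the only points requiring care are the membership of $M_P(\lambda)$ in $\Oa^\frp$ and the finite length of generalized Verma modules in $\cO$, both of which are classical.
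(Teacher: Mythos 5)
Your proof is correct and follows exactly the paper's route: identify $I^G_P(\lambda)$ with $\cF^G_P(M_P(\lambda))$, use the finite Jordan--H\"older series of the generalized Verma module together with the exactness of $\cF^G_P$ and the PQ-formula as recalled in Section~2, and pass to the quotient. You simply spell out in more detail the steps the paper compresses into two sentences.
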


\begin{proof} The representation $V^G_P(\lambda)$ is a quotient of $I^G_P(\lambda)$ which coincides by definition of the functor $\cF^G_P$ with
$\cF^G_P(M_P(\lambda))$. As explained in Section 1, the latter object has a finite Jordan-H\"older series, hence
the same holds true for $V^G_P(\lambda).$
\end{proof}

From the previous lemma we see that a Jordan-H\"older series of
$V^G_B(\lambda)$ is induced by such a series of the induced representation
$I^G_P(\lambda)$. Now we are able to give a receipt for the determination of
the composition factors  with respect to $V^G_B.$
For two reflections $w,w'$ in $W$, let $m(w',w)\in {\mathbb Z}_{\geq 0}$ be the multiplicity of the
simple $U(\frak{g})$-module $L(w \cdot 0)$ in the Verma module $M(w' \cdot
0)$. It is known that $m(w',w)\geq 1$ if and only if $w'\leq w$ for the
Bruhat order $\leq$ on $W$.  These multiplicities numbers can be computed
using Kazhdan-Lusztig polynomials, as it was conjectured by Kazhdan and
Lusztig in \cite[Conj. $1.5$]{KaLu} and proved independently by
Beilinson-Bernstein \cite{BeBe} and Brylinski-Kashiwara
\cite{BrKa}. Let's recall that if $w \in W$ is an element of the Weyl
group, the support ${\rm supp}(w)$ of $w$ is the set of simple reflections
appearing in one (and so in any) reduced expression of $w$. In the
following we identify the set of simple reflections with $\Delta$.  Then $w \in W_I$ if and only if ${\rm supp}(w) \subset I$.

Recall from the proof of Theorem \ref{resolution}, that we defined for every $w\in W$ a subset $I(w)\subset \Delta$
with $w\in{}^{I(w)}W$ and that the subset $I(w)$ is maximal  with this property.  On the other hand,
by \cite[p. $186-187$]{Hum2}, the simple module $L(w \cdot \lambda)$ lies in $\cO^{\frp_I}$ if and
only if $w \in {}^IW$. It follows that for $w\in W$, the parabolic Lie Algebra $\frp_{I(w)}$ is maximal for
$L(w\cdot \lambda).$

\begin{Theorem}\footnote{Note that as before there is
  the restriction that $p>2$ if the root system of $G$ contains a factor of
  type $B,C,F_4$ resp.  $p>3$ if a factor of type $G_2$ occurs.}\label{multiplicity}
  Fix $w \in W$ and let $I=I(w) \subset \Delta$ be as above. For a subset $J\subset \Delta$ with $J\subset I$, let $v_{P_J}^{P_I}$ be the generalized 
smooth Steinberg  representation of $L_{P_I}$. Then the multiplicity of the irreducible $G$-representation
  $\cF_{P_I}^G(L(w \cdot \lambda), v_{P_J}^{P_I})$ in $V_B^G(\lambda)$ is
  \begin{equation}\label{alternating_sum}
   \sum_{w'\in W \atop {\rm supp}(w')=J} (-1)^{\ell(w')+|J|}m(w',w),
  \end{equation}
and we obtain
  in this way all the Jordan-H\"older factors of $V_B^G(\lambda)$. Moreover, this
  multiplicity is non-zero if and only if $J \subset {\rm supp}(w)$.  In
  particular, the locally algebraic representation $v^G_B(\lambda)$ is the
  only locally algebraic subquotient of $V_B^G(\lambda)$. 
\end{Theorem}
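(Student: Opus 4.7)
The plan is to combine the acyclic Tits resolution of $V_B^G(\lambda)$ from Theorem \ref{resolution} (taken with $I=\emptyset$) with a Verma-module resolution of each parabolic Verma $M_K(\lambda)$, and then invoke Theorem \ref{isomorphic} to upgrade a Grothendieck-group identity into a statement about Jordan-H\"older multiplicities. The acyclicity immediately gives
\begin{equation*}
 [V_B^G(\lambda)]=\sum_{K\subset \Delta}(-1)^{|K|}[I^G_{P_K}(\lambda)]
\end{equation*}
in the Grothendieck group of finite-length locally analytic $G$-representations, and since $I^G_{P_K}(\lambda)=\cF^G_{P_K}(M_K(\lambda))$ by the very definition of $\cF^G_{P_K}$, the problem reduces to computing the composition factors of each $\cF^G_{P_K}(M_K(\lambda))$.

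First I would resolve $M_K(\lambda)$ by ordinary Verma modules: applying the exact functor $U(\frg)\otimes_{U(\frp_K)}(-)$ to the BGG resolution of the finite-dimensional $\frl_K$-module $V_K(\lambda)$ yields an exact sequence
\begin{equation*}
 0\to \bigoplus_{v\in W_K,\,\ell(v)=\ell(w_0^K)} M(v\cdot\lambda)\to \cdots \to M(\lambda)\to M_K(\lambda)\to 0,
\end{equation*}
hence $[M_K(\lambda):L(w\cdot\lambda)]=\sum_{v\in W_K}(-1)^{\ell(v)}m(v,w)$. Next I would apply the exact functor $\cF^G_{P_K}$: by the PQ-formula, for $K\subset I(w)$ one has $\cF^G_{P_K}(L(w\cdot\lambda))=\cF^G_{P_{I(w)}}(L(w\cdot\lambda),i^{P_{I(w)}}_{P_K})$, whose topologically irreducible constituents are, by the irreducibility criterion of Section 2 combined with the smooth Jordan-H\"older filtration of $i^{P_{I(w)}}_{P_K}$, precisely the representations $\cF^G_{P_{I(w)}}(L(w\cdot\lambda),v^{P_{I(w)}}_{P_J})$ for $K\subset J\subset I(w)$, each with multiplicity one.

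Substituting and exchanging the order of summation, the coefficient of $\cF^G_{P_{I(w)}}(L(w\cdot\lambda),v^{P_{I(w)}}_{P_J})$ works out to
\begin{equation*}
 \sum_{v\in W}(-1)^{\ell(v)}m(v,w)\sum_{{\rm supp}(v)\subset K\subset J}(-1)^{|K|},
\end{equation*}
and a standard inclusion-exclusion computation shows the inner sum vanishes unless ${\rm supp}(v)=J$, in which case it equals $(-1)^{|J|}$. This produces exactly formula \eqref{alternating_sum}. By Theorem \ref{isomorphic} the simples attached to distinct triples $(I(w),L(w\cdot\lambda),v^{P_{I(w)}}_{P_J})$ are pairwise non-isomorphic, so the Grothendieck-group coefficients are the genuine Jordan-H\"older multiplicities, and therefore non-negative integers.

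The remaining part is the non-vanishing criterion. The ``only if'' direction is immediate, because $m(w',w)\neq 0$ forces $w'\leq w$ in the Bruhat order and hence ${\rm supp}(w')\subset {\rm supp}(w)$; thus if $J\not\subset {\rm supp}(w)$ every term of the alternating sum is zero. The main obstacle I anticipate is the converse direction when $J\subset {\rm supp}(w)$, where one must rule out accidental cancellation in the alternating sum. The strategy here is to rewrite the coefficient as $\sum_{K\subset J}(-1)^{|K|}[M_K(\lambda):L(w\cdot\lambda)]$ and exploit the positivity and combinatorial structure of parabolic Verma multiplicities via Kazhdan-Lusztig theory, proceeding by induction on $|J|$ or on $\ell(w)$ to exhibit a non-cancelling term. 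Once non-vanishing is secured, the claim that $v_B^G(\lambda)$ is the only locally algebraic subquotient follows directly: local algebraicity forces $L(w\cdot\lambda)$ to be finite-dimensional, hence $w=1$, in which case ${\rm supp}(w)=\emptyset$ forces $J=\emptyset$, so the only locally algebraic factor is $\cF^G_G(V(\lambda),v^G_B)=v_B^G(\lambda)$.
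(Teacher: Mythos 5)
Up to the multiplicity formula your argument is essentially the paper's: the alternating sum over $K\subset\Delta$ coming from the acyclic resolution of Theorem \ref{resolution}, the identification $I^G_{P_K}(\lambda)=\cF^G_{P_K}(M_K(\lambda))$, the PQ-formula plus Theorem \ref{isomorphic} to see that $\cF^G_{P_I}(L(w\cdot\lambda),v^{P_I}_{P_J})$ occurs in $I^G_{P_K}(\lambda)$ exactly $[M_K(\lambda):L(w\cdot\lambda)]$ times when $K\subset J$ and not at all otherwise, and the inclusion--exclusion over $\operatorname{supp}(w')\subset K\subset J$. Your use of the resolution of $M_K(\lambda)$ by ordinary Verma modules is just a repackaging of the character identity $\operatorname{ch}M_K(\lambda)=\sum_{w'\in W_K}(-1)^{\ell(w')}\operatorname{ch}M(w'\cdot\lambda)$ that the paper quotes, so nothing is gained or lost there. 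The "only if" half of the non-vanishing criterion and the deduction that $v^G_B(\lambda)$ is the only locally algebraic subquotient are also fine as you state them.

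The genuine gap is the "if" direction of the non-vanishing criterion: when $J\subset\operatorname{supp}(w)$ you must show that the alternating sum $\sum_{\operatorname{supp}(w')=J}(-1)^{\ell(w')+|J|}m(w',w)$ does not cancel to zero, and here you only announce a strategy ("exploit positivity\dots via Kazhdan--Lusztig theory, by induction on $|J|$ or $\ell(w)$") without an actual argument; there is no evident induction that produces a non-cancelling term, since the signs genuinely alternate and individual $m(w',w)$ can exceed $1$. The paper needs its entire appendix for precisely this point: it builds the subcomplexes $C_\bullet(\frg,J)\subset C_\bullet(\frg)$ of the BGG resolution supported on $W_J$, identifies $C_\bullet(\frg,J)$ with $U(\frg)\otimes_{U(\frp_J)}C_\bullet(\frl_J)$ to get exactness in positive degrees, filters by the $C^{\leq i}_\bullet(\frg,J)$ to control the homology of the quotient complex $\overline{C}_\bullet(\frg,J)$, and thereby interprets your alternating sum as the multiplicity of $L(w\cdot\lambda)$ in $H_{|J|}(\overline{C}_\bullet(\frg,J))$, which dominates $[M_J:L(w\cdot\lambda)]$ where $M_J\subset M(\lambda)$ is generated by the $M(c\cdot\lambda)$ for $c$ the Coxeter elements of $W_J$; since some Coxeter element $c$ of $W_J$ satisfies $c\leq w$ exactly when $J\subset\operatorname{supp}(w)$, this gives $[M_J:L(w\cdot\lambda)]\geq m(c,w)\geq 1$ and hence positivity. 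Without an argument of this kind (or some other positivity input identifying the signed sum with a genuine multiplicity), your proof establishes the formula \eqref{alternating_sum} and the "all Jordan--H\"older factors arise this way" claim, but not the stated non-vanishing criterion.
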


\begin{proof}
  By Theorem \ref{resolution}, we get the
  following formula for the multiplicity of the simple object $\cF_{P_I}^G(L(w \cdot
  \lambda), v_{P_J}^{P_I})$ in $V^G_B(\lambda)$:
  \begin{equation*}
    [V_B^G(\lambda) : \cF_{P_I}^G(L(w \cdot \lambda), v_{P_J}^{P_I})]=\sum_{K \subset \Delta}
    (-1)^{|K|} [I_{P_K}^G(\lambda) : \cF_{P_I}^G(L(w \cdot \lambda), v_{P_J}^{P_I})].
  \end{equation*}
  By definition we have $I_{P_K}^G(\lambda)=\cF^G_{P_K}(M_K(\lambda))$. Since the Jordan-H\"older series
  of $M_K(\lambda)$ lies in $\Oa^{\frp_K}$, as well, we deduce by
  by  Theorem \ref{isomorphic}  and the discussion in Section 2 that $K\subset I$ is a necessary condition for
  the non-vanishing of the expression $[I_{P_K}^G(\lambda) : \cF_{P_I}^G(L(w \cdot \lambda),
  v_{P_J}^{P_I})]$.  On the other hand, $v_{P_J}^{P_I}$ is
  a subquotient of $i_{P_K}^{P_I}$ if and only if $K \subset J$. Again by
  Theorem \ref{isomorphic} and Section 2, the term $\cF_{P_I}^G(L(w \cdot \lambda),
  v_{P_J}^{P_I})$ appears in $I_{P_K}^G(\lambda)$ only if $K \subset
  J$.

So, let $K \subset J$ and consider the JH-component $Q=L(w \cdot \lambda)$ of $M_K(\lambda)$ in $\cO_{\alg}.$
Then  $\cF_{P_K}^G(Q,1)=\cF_{P_I}^G(Q, i_{P_K}^{P_I})$ by the PQ-formula. In a Jordan-H\"older series of $\cF_{P_I}^G(Q,
  i_{P_K}^{P_I})$ the term $\cF_{P_I}^G(Q, v_{P_J}^{P_I})$  appears
  with multiplicity one by Theorem \ref{isomorphic}, so that
  \begin{equation*}
    [I_{P_K}^G(\lambda) : \cF_{P_I}^G(L(w \cdot \lambda),
    v_{P_J}^{P_I})]=[M_K(\lambda) : L(w \cdot \lambda)].
  \end{equation*}
  As a consequence, we have
  \begin{equation*}
    [V_B^G(\lambda) : \cF_{P_I}^G(L(w \cdot \lambda), v_{P_J}^{P_I})]=\sum_{K \subset J}
    (-1)^{|K|} [M_K(\lambda) : L(w \cdot \lambda)].
  \end{equation*}
  Now we make use of the character formula  
$${\rm ch} M_K(\lambda)= \sum_{w' \in W_K}
    (-1)^{\ell(w')} {\rm ch} M(w' \cdot \lambda),$$
cf. \cite[Proposition $9.6$]{Hum2}. We obtain
  \begin{align*}
    [V_B^G(\lambda) : \cF_{P_I}^G(L(w \cdot \lambda),
    v_{P_J}^{P_I})]&=\sum_{K \subset J} (-1)^{|K|} \sum_{w' \in W_K}
    (-1)^{\ell(w')} [M(w' \cdot \lambda) : L(w \cdot
    \lambda)] \\
    &= \sum_{w' \in W} (-1)^{\ell(w')} [M(w' \cdot \lambda) : L(w \cdot
    \lambda)] \sum_{{\rm supp}(w') \subset K \subset J} (-1)^{|K|}.
  \end{align*}
  But the sum $$\sum_{{\rm supp}(w') \subset K \subset J}
  (-1)^{|K|}=(1-1)^{|J \backslash {\rm supp}(w')|}$$ is non-zero if and only
  if ${\rm supp}(w')=J$, so we obtain the formula. The
  non-vanishing criterion follows from Corollary \ref{inequality} in
  the appendix.
\end{proof}

\begin{Example}
  a) Let $G=\GL_3,$ $\Delta=\{\alpha_1, \alpha_2 \}$. Let $s_i$ be the
  element of $W$ corresponding to $\alpha_i$ and abbreviate
  $P_i=P_{\{ \alpha_i \} }$. In this case, $m(w',w)=1$ if and only if $w' \leq w$
  for the Bruhat order (cf. \cite[8.3.(c)]{Hum2}). As a consequence, we
  obtain the following Jordan-H\"older factors all with multiplicity one :
 \begin{gather*}
   v_B^G \\ \cF_{P_1}^G(L(s_2 \cdot 0), v_B^{P_1}), \, \cF_{P_2}^G(L(s_1
   \cdot 0), v_B^{P_2}) \\ \cF_{P_1}^G(L(s_2s_1 \cdot 0), 1), \,
   \cF_{P_1}^G(L(s_2s_1 \cdot 0),
   v_B^{P_1}) \\
   \cF_{P_2}^G(L(s_1s_2 \cdot 0), 1), \, \cF_{P_2}^G(L(s_1s_2 \cdot 0), v_B^{P_2}) \\
   \cF_B^G(L(s_1s_2s_1 \cdot 0), 1).
 \end{gather*}
This particular result was already shown in \cite{Schr}.

b) Let $G=\GL_4$. Here there are exactly two subquotients of $V_B^G$
with multiplicity greater than one. We fix notation as follows. Let
$\Delta=\{\alpha_1, \alpha_2, \alpha_3 \}$ such that $s_1$ and $s_3$
commute. We use the more compact notation $P_i=P_{\{\alpha_i \}}$,
$P_{i,j}=P_{\{\alpha_i, \alpha_j \}}$, $W^{i,j}=W^{\{\alpha_i,\alpha_j\}}$ etc. The two subquotients with
multiplicity greater than one are then $\cF_{P_{1,3}}^G(L(s_2s_3s_1s_2
\cdot 0), v_B^{P_{1,3}})$ and $\cF_{P_2}^G(L(s_1s_3s_2s_3s_1 \cdot 0),
v_B^{P_2})$ and they have both multiplicity $2$. The length of $V_B^G$ is
$50$ and it contains $48$ non-isomorphic Jordan-H\"older components. Let
us remark that this example also shows that Kazhdan-Lusztig multiplicities are the
reason for unexpected components in $V_B^G$. For instance, the component
$\cF_{P_2}^G(L(s_1s_3s_2s_1s_3 \cdot 0, 1))$ appears in
$I_{P_2}^G$, but passing to $V_B^G$ kills only one of its two occurrences in
$I_B^G$.

Here is the idea to carry out this computation. The number of components of
the form $\cF_{P_I}^G(L(w \cdot \lambda, V)$ for a fixed $w \in W^I$ with
$I=I(w)$ maximal and $V$ a smooth irreducible subquotient of $i^{L_I}_{L_I\cap B}$, is
exactly the number of subsets of $I \cap {\rm supp}(w)$, that is $2^{|I
  \cap {\rm supp}(w)|}$. Let $W^I_p$ be the subset of $W^I$ consisting of
elements for which $I$ is maximal, an easy computation shows that
 \begin{equation}
   |W^I_p|=\sum_{J \subset \Delta \backslash I} (-1)^{|J|}|W^{I \cup
     J}|=\sum_{J \subset \Delta \backslash I} (-1)^{|J|}|W|/|W_{I \cup J}|.
 \end{equation}
 If $I=\varnothing$ or $I=\Delta$, then $|W^I_p|=1$ and $|I\cap {\rm supp}(w)|=1.$ Furthermore, we compute easily that
 \begin{equation*}
   \begin{gathered} |W^{1,2}_p|=|W^{2,3}_p|=|W^{1}_p|=|W^{3}_p|=3 \\
     |W^{1,3}_p|=|W^{2}_p|=5.
   \end{gathered}
 \end{equation*}
 We see for example that $W^{1,2}_p=\{s_3, s_3s_2,s_3s_2s_1 \}$ so
 that $|\{1,2 \} \cap {\rm supp}(w)|$ takes the values $0$, $1$ and $2$ when $w$
 is varying in $W^{\{1,2 \}}_p$. This procedure gives us the cardinality of the composition factors appearing in
 $V_B^G$. As for the multiplicities, we can use a software package like
 CHEVIE (\cite{GHLMP}) to see that the only simple modules $L(w \cdot 0)$ having
 multiplicities bigger than one in $M(0)$ are given by $w=s_2s_1s_3s_2$ and
 $w=s_1s_3s_2s_1s_3$.
\end{Example}

\section{Appendix}

In this appendix, we prove the last assertion of Theorem \ref{multiplicity}
by interpreting the alternate sum (\ref{alternating_sum}) as the multiplicity of $L(w \cdot
\lambda)$ in the $H_0$ of a subcomplex of the BGG complex. Then we show that
the submodule of $M(\lambda)$ generated by the $M(w \cdot \lambda)$ with $w$ a
Coxeter element in $W_J$ is a quotient of this $H_0$. We fix the same notation as before.

Let $\lambda \in X_+$ and recall from Section 1 that the BGG
resolution of $L(\lambda)=V(\lambda)$ takes the form
\begin{equation*}
  \cdots \rightarrow C_i(\frg) \xrightarrow{\epsilon_i} C_{i-1}(\frg)
  \xrightarrow{\epsilon_{i-1}} \cdots \xrightarrow{\epsilon_1} C_0(\frg)
  \rightarrow L(\lambda) \rightarrow 0
\end{equation*}
with
$$C_i(\frg)=\bigoplus_{\ell(w)=i} M(w \cdot \lambda).$$ For each $w \in W$, fix a
non zero injection $M(w \cdot \lambda) \hookrightarrow M(\lambda)$. This
gives, for each $w' \leq w$, a unique compatible injection $i_{w,w'} : \,
M(w \cdot \lambda) \hookrightarrow M(w' \cdot \lambda)$.

Let $I$ be a subset of $\Delta$. For $i \geq 0$, let $C_i(\frg, I) \subset
C_i(\frg)$ be defined by
$$ C_i(\frg, I) := \bigoplus_{w \in W_I, \ell(w)=i} M(w \cdot \lambda).$$
As $\epsilon_i(C_i(\frg,I)) \subset C_{i-1}(\frg, I)$, we obtain
a subcomplex $C_{\bullet}(\frg, I)$ of $C_{\bullet}(\frg)$. Now we abbreviate $W_I^{(i)}$ for the
subset of $W_I$ consisting of elements of length $i$.

\begin{Lemma}\label{tensor}
  The complex $C_{\bullet}(\frg, I)$ is isomorphic to the complex $U(\frg)
  \otimes_{U(\frp_I)} C_{\bullet}(\frl_I)$, hence it is exact in positive
  degrees.
\end{Lemma}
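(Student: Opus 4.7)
The strategy is to identify $C_\bullet(\frg,I)$ with $U(\frg)\otimes_{U(\frp_I)}C_\bullet(\frl_I)$ termwise via the PBW theorem, match the BGG differentials, and then deduce exactness from the exactness of the BGG complex for $\frl_I$ combined with the flatness of $U(\frg)$ as a right $U(\frp_I)$-module.

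First I will identify the two complexes at the level of modules. By PBW we have $U(\frg)\simeq U(\fru_I^-)\otimes_K U(\frp_I)$ and $U(\frp_I)\simeq U(\frl_I)\otimes_K U(\fru_I)$ as appropriate one-sided modules, so that viewing $M_{\frl_I}(w\cdot\lambda)=U(\frl_I)\otimes_{U(\frb\cap\frl_I)}K_{w\cdot\lambda}$ as a $\frp_I$-module via the trivial action of $\fru_I$, one obtains a canonical identification
\[
U(\frg)\otimes_{U(\frp_I)}M_{\frl_I}(w\cdot\lambda)\;\simeq\;U(\fru_I^-)\otimes_K M_{\frl_I}(w\cdot\lambda)\;\simeq\;U(\frg)\otimes_{U(\frb)}K_{w\cdot\lambda}\;=\;M(w\cdot\lambda)
\]
for every $w\in W_I$. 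Summing over $w\in W_I$ of fixed length gives an identification in each degree.

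Next I will check that these identifications are compatible with the differentials. A basic fact I rely on is that $W_I$ is downward-closed for the Bruhat order on $W$: any $w'\leq w$ with $w\in W_I$ lies in $W_I$. Consequently, if $w\in W_I$ has length $i$, every covering $w'<w$ with $\ell(w')=i-1$ satisfies $w'\in W_I$, so the BGG differential on $C_\bullet(\frg)$ preserves the subcomplex $C_\bullet(\frg,I)$ (as already implicitly used in the excerpt). Each component $M(w\cdot\lambda)\to M(w'\cdot\lambda)$ of the BGG differential is a scalar multiple of the canonical inclusion $i_{w,w'}$, uniquely determined (up to a global sign) by the initial choice of injections $M(w\cdot\lambda)\hookrightarrow M(\lambda)$. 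Under the identification above, $i_{w,w'}$ corresponds exactly to $U(\frg)\otimes_{U(\frp_I)}$ applied to the canonical inclusion $M_{\frl_I}(w\cdot\lambda)\hookrightarrow M_{\frl_I}(w'\cdot\lambda)$; fixing the scalars in the BGG complex for $\frl_I$ coherently with those of $\frg$ therefore produces a genuine isomorphism of complexes.

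Finally, for exactness: since $\lambda\in X_+\subset X_I^+$, the weight $\lambda$ is $L_I$-dominant, and $C_\bullet(\frl_I)$ is the BGG resolution of the finite-dimensional simple $\frl_I$-module $V_I(\lambda)$; in particular it is exact in positive degrees. The PBW decomposition $U(\frg)\simeq U(\fru_I^-)\otimes_K U(\frp_I)$ exhibits $U(\frg)$ as a free right $U(\frp_I)$-module, hence the functor $U(\frg)\otimes_{U(\frp_I)}(-)$ is exact. Applying it to the resolution $C_\bullet(\frl_I)$ produces an acyclic complex in positive degrees, which proves the claim. The main obstacle will be the bookkeeping in the middle step, namely calibrating the scalars appearing in the BGG differentials for $\frg$ and for $\frl_I$ so that the termwise PBW identification is an actual isomorphism of complexes rather than merely a termwise isomorphism commuting with differentials up to scalar.
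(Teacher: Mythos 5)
Your overall route is the paper's: identify $C_{\bullet}(\frg,I)$ termwise with $U(\frg)\otimes_{U(\frp_I)}(-)$ applied to an $\frl_I$-complex via PBW, then use exactness of the BGG resolution for $\frl_I$ together with the freeness of $U(\frg)$ as a right $U(\frp_I)$-module. The gap sits exactly in the step you set aside as ``bookkeeping''. Your claim that each component of the BGG differential is ``uniquely determined (up to a global sign) by the initial choice of injections $M(w\cdot\lambda)\hookrightarrow M(\lambda)$'' is not correct: the scalars $a^i_{w_1,w_2}$ attached to the arrows $w_2\le w_1$ of the Bruhat order are constrained only by anticommutativity of the squares, and different admissible systems differ by independent rescalings of the summands, not by one global sign. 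Consequently, after transporting the $\frg$-scalars with $w_1,w_2\in W_I$ through the PBW identification you obtain a complex $\widetilde{C}_{\bullet}(\frl_I)$ with the same terms as $C_{\bullet}(\frl_I)$ but a priori different differentials, and you still owe two substantive facts: (i) all inherited coefficients $a^i_{w_1,w_2}$ with $w_2\le w_1$ in $W_I$ are nonzero --- if one vanished, $\widetilde{C}_{\bullet}(\frl_I)$ could not be isomorphic to the standard BGG complex and your exactness transfer would break; (ii) any complex of this shape with nonzero coefficients and anticommuting squares is isomorphic to (in particular, as exact as) the standard BGG resolution of $V_I(\lambda)$. Point (ii) is a genuine uniqueness-up-to-rescaling statement about the Bruhat graph, not calibration.

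This is precisely how the paper closes the argument: it cites \cite[Lemma 10.2]{Ro} for the nonvanishing of all $a^i_{w_1,w_2}$ with $w_2\le w_1$, and \cite[Lemma 10.5, Corollary 10.7]{Ro} for the statement that any such system of nonzero scalars satisfying the square relations yields a complex isomorphic to $C_{\bullet}(\frl_I)$, hence exact in positive degrees; the observation needed to apply this over $\frl_I$ is that the Bruhat order on $W_I$ is induced from that of $W$, so the squares of $W$ with vertices in $W_I$ are exactly the squares of $W_I$ (this also subsumes your downward-closedness remark, which guarantees $C_{\bullet}(\frg,I)$ is a subcomplex in the first place). The remaining ingredients of your plan --- the termwise identification $U(\frg)\otimes_{U(\frp_I)}M_{\frl_I}(w\cdot\lambda)\simeq M(w\cdot\lambda)$, the freeness of $U(\frg)$ over $U(\frp_I)$, and $\lambda\in X_+\subset X^+_I$ --- are correct and agree with the paper; with (i) and (ii) supplied, by proof or by citation, your argument closes.
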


\begin{proof}
  We can write $\epsilon_i|_{C_i(\frg,I)}=\sum_{w_1,w_2} a_{w_1,w_2}^i
  i_{w_1,w_2}$ where the sum is over all $w_1 \in W_I^{(i)}$ and $w_2 \in
  W_I^{(i-1)}$ such that $w_2 \leq w_1$. By
  \cite[Lemma $10.2$ ]{Ro} we have $a_{w_1,w_2}^i \neq 0$ for each such pair
  $(w_1,w_2)$. We fix injections $U(\frl_I) \otimes_{U(\frb \cap
  \frl_I)} K_{w \cdot \lambda} \hookrightarrow U(\frl_I) \otimes_{U(\frb \cap
  \frl_I)} K_\lambda$, $w\in W_I$, giving rise to well defined maps 
$$i_{w,w'}^I : \, U(\frl_I)  \otimes_{U(\frb \cap \frl_I)} K_{w \cdot \lambda} \hookrightarrow U(\frl_I)
  \otimes_{U(\frb \cap \frl_I)} K_{w' \cdot \lambda}$$ such that after
  applying $U(\frg) \otimes_{U(\frp_I)} -$, we have the equality ${\rm id} \otimes
  i_{w,w'}^I=i_{w,w'}$. Now consider the complex
  $\widetilde{C}_{\bullet}(\frl_I)$ where
  $\widetilde{C}_{\bullet}(\frl_I)=C_{\bullet}(\frl_I)$ and where the
  differential maps are given by $\tilde{\epsilon}_i=\sum_{w_1 \leq w_2 \in
  W_I}
  a_{w_1,w_2}^i i_{w_1,w_2}^I$, so that $C_{\bullet}(\frg,I)=U(\frg)
  \otimes_{U(\frp_I)} \widetilde{C}_{\bullet}(\frl_I)$. As the Bruhat order
  on $W_I$ is induced by the Bruhat order of $W$, the squares (see
  \cite[Definition $10.3$]{Ro}) of $W$ with elements in $W_I$ are
  exactly the squares of $W_I$, so that $\widetilde{C}_{\bullet}(\frl_I)$
  is a complex. By Corollary $10.7$ or Lemma $10.5$ of \cite{Ro} the
  complex $\widetilde{C}_{\bullet}(\frl_I)$ is exact in positive degrees
  and even isomorphic to the BGG complex $C_{\bullet}(\frl_I)$.
\end{proof}

For every integer $i \geq 0$, let $C_{\bullet}^{\leq i}(\frg,I)$ be the subcomplex
of $C_{\bullet}(\frg,I)$ defined by
$$C_j^{\leq i}(\frg,I)=\bigoplus_{\exists J\subset I, |J| \leq i \atop w\in W_J, \ell(w)=j} M(w \cdot \lambda)$$ and
set $\overline{C}_{\bullet}(\frg,I)=C_{\bullet}(\frg,I)/C_{\bullet}^{ \leq |I|-1}(\frg,I)$.

\begin{Proposition}
  The complex $C_{\bullet}^{\leq i}(\frg,I)$ is exact in degrees $\geq i+1$
  and the complex $\overline{C}_{\bullet}(\frg,I)$ in degree $\geq |I|+1$.
\end{Proposition}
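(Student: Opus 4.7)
The plan is a joint induction on $n = |I|$. The base case $n = 0$ is trivial, as both $C_\bullet^{\leq 0}(\frg, \emptyset)$ and $\overline{C}_\bullet(\frg, \emptyset)$ collapse to $M(\lambda)$ concentrated in degree zero. For the inductive step I assume both assertions hold for all $I' \subsetneq I$.

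The central structural observation is that $C_\bullet^{\leq 0}(\frg, I) \subset C_\bullet^{\leq 1}(\frg, I) \subset \cdots \subset C_\bullet(\frg, I)$ is a filtration by subcomplexes whose successive quotients admit a clean description. The subcomplex property is forced by the fact that the BGG differential sends $M(w \cdot \lambda)$ to summands indexed by $w' \leq w$ in the Bruhat order, and $w' \leq w$ implies $\mathrm{supp}(w') \subset \mathrm{supp}(w)$. Regrouping the surviving summands in the filtration quotient according to their support $J := \mathrm{supp}(w)$ then yields a canonical isomorphism of complexes
$$C_\bullet^{\leq i}(\frg, I)/C_\bullet^{\leq i-1}(\frg, I) \;\simeq\; \bigoplus_{\substack{J \subset I \\ |J|=i}} \overline{C}_\bullet(\frg, J),$$
since any contribution from $M(w \cdot \lambda)$ to $M(w' \cdot \lambda)$ in which both supports have size $i$ must satisfy $\mathrm{supp}(w') = \mathrm{supp}(w)$.

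Granting this identification, I would first establish the assertion on $C_\bullet^{\leq i}(\frg, I)$ by an inner induction on $i$, trivial for $i = 0$. The inductive step applies the long exact sequence in homology attached to
$$0 \to C_\bullet^{\leq i-1}(\frg, I) \to C_\bullet^{\leq i}(\frg, I) \to \bigoplus_{\substack{J \subset I \\ |J|=i}} \overline{C}_\bullet(\frg, J) \to 0.$$
For $i \leq n-1$ every $J$ appearing satisfies $|J| < n$, so the outer hypothesis guarantees that each $\overline{C}_\bullet(\frg, J)$ is exact in degrees $\geq i+1$; combined with the inner hypothesis on $C_\bullet^{\leq i-1}(\frg, I)$, this forces exactness of $C_\bullet^{\leq i}(\frg, I)$ in degrees $\geq i+1$. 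The case $i = n$ is absorbed into Lemma \ref{tensor}, which already ensures exactness of $C_\bullet(\frg, I)$ in all positive degrees.

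Finally, the assertion on $\overline{C}_\bullet(\frg, I)$ will follow from Lemma \ref{tensor} together with the just-established case $i = n-1$ of the first assertion, via the long exact sequence attached to
$$0 \to C_\bullet^{\leq n-1}(\frg, I) \to C_\bullet(\frg, I) \to \overline{C}_\bullet(\frg, I) \to 0.$$
The main obstacle is really the identification of the filtration quotients; once the monotonicity of $\mathrm{supp}$ under the Bruhat order is noted, the rest reduces to routine bookkeeping with nested long exact sequences.
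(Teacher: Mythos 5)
Your argument is correct and is essentially the paper's proof: the same short exact sequence of complexes $0 \to C_{\bullet}^{\leq i-1}(\frg,I) \to C_{\bullet}^{\leq i}(\frg,I) \to \bigoplus_{J \subset I, |J|=i} \overline{C}_{\bullet}(\frg,J) \to 0$, the same appeal to Lemma \ref{tensor} for the full complex $C_{\bullet}(\frg,I)$, and the same long-exact-sequence bookkeeping. The only difference is organizational (you run a double induction on $(|I|,i)$ where the paper inducts on $i$ alone, handling the case $|I|=i$ inside that induction), and you justify the identification of the graded pieces via monotonicity of $\mathrm{supp}$ under the Bruhat order, where the paper invokes $W_{J_1}\cap W_{J_2}=W_{J_1\cap J_2}$ -- both amount to the same routine verification.
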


\begin{proof}
  We will prove this proposition by induction on $i$. For $i=0$, this is
  clear. Now we can apply induction hypothesis to the long exact sequence coming from
  the exact sequence of complexes
  \begin{equation} \label{exeqcomplex}
    0 \rightarrow C_{\bullet}^{\leq i-1}(\frg,I) \rightarrow
    C_{\bullet}^{\leq i}(\frg,I) \rightarrow \bigoplus_{J \subset I, |J|=i}
    \overline{C}_{\bullet}(\frg,J) \rightarrow 0.
  \end{equation}
  (Here we use the identity $W_{J_1} \cap W_{J_2}=W_{J_1 \cap J_2}$ for the appearance of the direct sum.)
  In the special case, where $|I|=i$, we can deduce from Lemma \ref{tensor} that
  $C_{\bullet}(\frg,I)$ is acyclic in strictly positive degrees, hence by
  induction that $H_j(\overline{C}_{\bullet}(\frg,I))=0$ for $j > i$. In general
  we can use \eqref{exeqcomplex} to deduce that
  $H_j(C_{\bullet}^{\leq i}(\frg,I))=0$ when $j > i$.
\end{proof}

\begin{Corollary} \label{inequality}
  For $I \subset \Delta$,  let $M_I$ be the submodule of $M(\lambda)$
  generated by the Verma modules $M(w \cdot \lambda)$ where  $w$ is a Coxeter
  element in $W_I$. For $w \in W$, we have
  \begin{equation}
    [M_I:L(w \cdot \lambda)] \leq \sum_{{\rm supp}(w')=I} (-1)^{|I|+\ell(w')}[M(w'
    \cdot \lambda):L(w \cdot \lambda)].
    \end{equation}
  Hence this sum is non-zero if and only if $I \subset {\rm supp}(w)$.
\end{Corollary}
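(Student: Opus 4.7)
The plan is to interpret the right-hand side of the inequality as a composition multiplicity in $H_{|I|}(\overline{C}_\bullet(\frg,I))$ and then to construct a surjection from this homology module onto $M_I$. By the previous Proposition, $\overline{C}_\bullet(\frg,I)$ is exact in degrees $\geq|I|+1$; since any $w\in W_I$ with ${\rm supp}(w)=I$ has length at least $|I|$, we also have $\overline{C}_j(\frg,I)=0$ for $j<|I|$. Thus the homology is concentrated in degree $|I|$, and the Euler-characteristic identity in the Grothendieck group of $\cO$ gives
\begin{equation*}
 [H_{|I|}(\overline{C}_\bullet(\frg,I)):L(w\cdot\lambda)]=\sum_{{\rm supp}(w')=I}(-1)^{|I|+\ell(w')}[M(w'\cdot\lambda):L(w\cdot\lambda)],
\end{equation*}
identifying the right-hand side of the claimed inequality with $[H_{|I|}(\overline{C}_\bullet(\frg,I)):L(w\cdot\lambda)]$.

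Next, I would construct a surjection $H_{|I|}(\overline{C}_\bullet(\frg,I))\twoheadrightarrow M_I$. After fixing a coherent family of normalised embeddings $i_{w,w'}:M(w\cdot\lambda)\hookrightarrow M(w'\cdot\lambda)$ satisfying $i_{w',e}\circ i_{w,w'}=i_{w,e}$ for all $w'\leq w$, the sum of the embeddings $i_{w,e}$ indexed by Coxeter elements of $W_I$ gives a natural surjection $\overline{C}_{|I|}(\frg,I)=\bigoplus_{w\text{ Coxeter in }W_I}M(w\cdot\lambda)\twoheadrightarrow M_I$. The crux is to verify that this map descends to the cokernel of $\overline{\epsilon}_{|I|+1}$, which is a sign-tracking exercise exploiting the BGG relation $\epsilon\circ\epsilon=0$: for $w\in W_I$ with $\ell(w)=|I|+1$ and ${\rm supp}(w)=I$, the non-Coxeter length-$|I|$ covers of $w$ are suppressed by the passage to $\overline{C}_{|I|}(\frg,I)$, and the Coxeter contributions cancel by the sign constraints imposed by $\epsilon^2=0$ on the Bruhat squares between $w$ and its length-$(|I|-1)$ descendants. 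Combined with the preceding paragraph, this gives $[M_I:L(w\cdot\lambda)]\leq[H_{|I|}(\overline{C}_\bullet(\frg,I)):L(w\cdot\lambda)]$.

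For the non-vanishing criterion: if $I\not\subset{\rm supp}(w)$, then no $w'$ with ${\rm supp}(w')=I$ satisfies $w'\leq w$ in the Bruhat order, so $[M(w'\cdot\lambda):L(w\cdot\lambda)]=0$ for each such $w'$ and the alternating sum is zero. Conversely, if $I\subset{\rm supp}(w)$, fix a reduced expression of $w$ and extract one occurrence of each simple reflection in $I$ in the order they appear; this is a reduced word for a Coxeter element $w''\in W_I$ with $w''\leq w$, so $L(w\cdot\lambda)$ is a composition factor of $M(w''\cdot\lambda)\subset M_I$, whence $[M_I:L(w\cdot\lambda)]>0$ and the inequality forces strict positivity of the sum.

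The main obstacle is the sign-tracking in the second step, verifying that the natural surjection onto $M_I$ factors through the cokernel of $\overline{\epsilon}_{|I|+1}$. Should this prove delicate, an alternative route is to exploit the long exact sequence attached to $0\to C_\bullet^{\leq|I|-1}(\frg,I)\to C_\bullet(\frg,I)\to\overline{C}_\bullet(\frg,I)\to 0$: the vanishing of $H_{\geq 1}(C_\bullet(\frg,I))$ provided by Lemma \ref{tensor} yields an isomorphism $H_{|I|}(\overline{C}_\bullet(\frg,I))\cong H_{|I|-1}(C_\bullet^{\leq|I|-1}(\frg,I))$ for $|I|\geq 2$, which can be unwound inductively via the filtration $\{C_\bullet^{\leq i}(\frg,I)\}_i$ to obtain an invariant description of $H_{|I|}(\overline{C}_\bullet(\frg,I))$ from which the surjection onto $M_I$ becomes readable.
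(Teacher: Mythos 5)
Your proposal follows essentially the same route as the paper's proof: identify the alternating sum with $[H_{|I|}(\overline{C}_{\bullet}(\frg,I)):L(w\cdot\lambda)]$ using the concentration of homology in degree $|I|$, exhibit $M_I$ as a quotient of this homology via the map on $\overline{C}_{|I|}(\frg,I)$ built from the fixed embeddings of the Coxeter Verma modules into $M(\lambda)$, and deduce the non-vanishing criterion from the fact that a Coxeter element of $W_I$ lies below $w$ in the Bruhat order exactly when $I\subset{\rm supp}(w)$. One caveat: the descent of the unweighted sum $\sum_c i_{c,1}$ to the cokernel of $\overline{\epsilon}_{|I|+1}$ is not forced by $\epsilon\circ\epsilon=0$ alone (it depends on the normalisation of the embeddings, as a direct check in type $A_2$ shows), so one should allow nonzero scalar weights on the $i_{c,1}$ -- which leaves the image $M_I$ unchanged -- a point the paper's own one-line remark glosses over just as quickly.
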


\begin{proof}
  As the complex $\overline{C}_{\bullet}(\frg,I)$ is exact in degree $>
  |I|$ and $\overline{C}_j(\frg,I)=0$ for $j < |I|$, it is sufficient to
  prove that $M_I$ is a quotient of
  $H_{|I|}(\overline{C}_{\bullet}(\frg,I))$. Remark that
  $M'=\epsilon_{|I|}(\bigoplus_{{\rm supp}(w)=I, \ell(w)=|I|}M(w))$ is a
  quotient of $H_{|I|}(\overline{C}_i(\frg,I))$. Then the image of $M'$ by
  $\sum_{{\rm supp}(w)=I, \ell(w)=|I|} i_{w,1}$ is exactly $M_I$, and we
  deduce the result. It is not difficult to see that there exists a Coxeter
  element $w'$ of $W_I$ such that $w' \leq w$ if and only if $I \subset
  {\rm supp}(w)$. 
\end{proof}

\end{document}